\newenvironment{proof}{{\noindent \it Proof.}}{\hfill $\blacksquare$\par}
\newtheorem{theorem}{Theorem}[section]
\newtheorem{lemma}[theorem]{Lemma}
\begin{document}


\title{Maximum   zeroth-order general Randi\'{c} index of orientations of cacti}
\author{ Jiaxiang Yang$^{1}$, Hanyuan Deng$^{1}$\thanks{Corresponding author: hydeng@hunnu.edu.cn}, Zikai Tang$^{1}$, Hechao Liu$^{2}$ \\
{$^{1}$\small Key Laboratory of Computing and Stochastic Mathematics (Ministry of Education),}
 \\{\small School of Mathematics and Statistics, Hunan Normal University,}
 \\{\small  Changsha, Hunan 410081, P. R. China.}
\\{$^{2}$\small School of Mathematical Sciences, South China Normal University,}
 \\{\small  Guangzhou, 510631, P. R. China.}}

\date{}
\maketitle

\begin{abstract}
The  zeroth-order general Randi\'{c} index $R^{0}_{a+1}$ of an $n$-vertices oriented graph $D$  is equal to the sum of  $(d^{+}_{u_i})^{a}+(d^{-}_{u_j})^{a}$ over all arcs $u_iu_j$ of $D$,
where we denote by $d^{+}_{u_i}$   the out-degree  of the vertex $u_i$ and $d^{-}_{u_j}$  the  in-degree of the vertex $u_j$, $a$  is an arbitrary real number. In the paper, we determine the orientations of cacti with the maximum value of the zeroth-order general Randi\'{c} index for $a\geq 1$. \\
{\bf Keywords}:  zeroth-order general Randi\'{c} index; cactus; extremal graph.
\end{abstract}

\maketitle

\makeatletter
\renewcommand\@makefnmark%
{\mbox{\textsuperscript{\normalfont\@thefnmark)}}}
\makeatother

\baselineskip=0.25in

\section{Introduction}
\label{1}

For a connected simple graph $G$ with  vertex set $V(G)$ and  edge set $E(G)$,
$N_{G}(u)$ denotes the set of all neighbors of $u$, and we denote by
$d_G(u)=|N_{G}(u)|$  the degree of $u$  in $G$. If $d_G(u)=1$, then the vertex $u$ is  a pendent vertex, and  $uv\in E(G)$ is
 a pendent edge.
Topological index plays an important role  in QSPR
and QSAR; see \cite{1,2}.
 Li and Zheng \cite{3} proposed first general Zagreb index.
In the current study, most scholars call it  the zeroth-order general Randi\'{c} index which is defined as
$R^{0}_{a}(G)=\sum\limits_{u\in V(G)}(d_{G}(u))^{a}=\sum\limits_{uv\in E(G)}[(d_{G}(u))^{a-1}+(d_{G}(v))^{a-1}],$
where  $a$ is  an arbitrary real number.
So, the zeroth-order general Randi\'{c} index is also a VDB topological index.
Over the past few decades, a great deal of study of researchers have shown  many results of extremal  zeroth-order general Randi\'{c} index  of a  graph family; see  \cite {4,5,6,7,8,9,10,11}.
 Monsalve and Rada  gave the defination of  VDB topological indices over orientations of a graph in \cite {JM211}, and brought us some help to study extremal VDB topological indices  of orientations of a  graph   in \cite {JM212}.

For $u, v\in V(G)$,  we can add
an edge $uv\notin E(G)$  to graph $G$  and get the graph $G+uv$.
For $u, v\in V(G)$,  we can delete
an edge $uv\in E(G)$  and get the graph $G-uv$.
If we  delete  $u \in V(G)$ and all $e\in \{e|e=uv\in E(G), v\in V(G)\}$
, then we get the  graph $G-u$.

If a connected graph $G$ only consists of blocks which are either edges or cycles, then $G$ is called  a cactus.   Let $\delta(G)=\min\{d(u)| u\in V(G)\}$.

For a simple  digraph $D$, the digraph $D$ has  vertex set $V(D)$  and arc set $A(D)$, where $uv\in A(D)$  is an arc that from vertex $u$  to vertex $v$ in the graph $D$. For vertex $u \in V(D)$,  the out-degree $d^{+}_u=|\{v| uv\in A(D), v \in V(D)\}|$ (resp. in-degree $d^{-}_u=|\{v| vu\in A(D), v \in V(D)\}|$).

  For a vertex $w\in \{w| d^{+}_w=0,w\in V(D)\}$,  we call  $w$   a sink vertex in $D$.  For a vertex $w\in \{w| d^{-}_w=0,w\in V(D)\}$,  we call  $w$   a  source vertex  in $D$. For a vertex $w\in \{w| d^{+}_w=d^{-}_w=0,w\in V(D)\}$,  we call  $w$    an isolated vertex in $D$. For each $uv\in E(G)$, we use $uv\in A$ or $vu\in A$ to replace $uv\in E(G)$ and get an oriented graph $D$ (i.e. an orientation of $G$ ).
  An orientation of $G$ is called a sink-source orientation of $G$ if
   $d^{+}_w=0$ or $d^{-}_w=0$  for any $w\in V(G)$.  Let $\mathcal{O}(G)=\{D| D$ is a orientation of $G\}$.

The definition  of
the VDB topological indices of a digraph $D$ is as \cite{JM211}
$$\varphi(D)=\frac{1}{2}\sum_{uv\in A}\varphi(d^{+}_{u}, d^{-}_v),$$
where $\varphi(d^{+}_{u}, d^{-}_v)$ is  an arbitrary real number.
Specially, if $\varphi(i, j)=i^{a-1}+j^{a-1}$,  then $\varphi(D)$ is the zeroth-order general Randi\'{c} index  of a digraph, denoted by $R^{0}_{a}(D)$, i.e.,
$$R^{0}_{a}(D)=\frac{1}{2}\sum\limits_{uv\in A}[(d^{+}_u)^{a-1}+(d^{-}_v)^{a-1}].$$


Monsalve and Rada \cite {JM211} determined sharp upper and lower bounds of the Randi\'{c} index over the set of all digraphs in terms of the order,  the set of all  oriented trees in terms of the order, the set of all oriented paths and cycles in terms of the order,   the set of all oriented  hypercubes in terms of the  dimension, respectively.
Deng et al. \cite{HD22} determined sharp upper and lower bounds of some VDB topological indices
 over the set of all digraphs in terms of the order, such as the harmonic index, the Atom-Bond-Connectivity index, the Geometric-Arithmetic index, the sum-connectivity index, the first and second Zagreb indices and the Randi\'{c} index.
The maximal values and minimal values  of  VDB topological
indices over the set of  all oriented trees in terms of the order and the set of all oriented graphs in terms of the order were obtained by  Monsalve and Rada \cite {JM212}.
 Soon after,  the problem that is to determine a sharp upper  bound of first Zagreb  index over the set of all oriented  unicyclic graphs in terms of the order and
matching number  was solved by  Yang and  Deng \cite{JY22}.

 In this paper, we discuss orientations of cacti in terms of the order with maximum value  of  the zeroth-order general Randi\'{c} index.

\section{Some Lemmas}

 A lot of lemmas  correlated with main result are given in the section. Let $\mathcal{G}(n, r)$ be the set of cacti of order $n$ and  $r$ cycles. Let $G^{0}(n, r)$, $G^{(1)}(n,r), G^{(2)}(n,r)$ be the graphs  depicted in Figure \ref{fig-1}. $\tilde{\mathcal{G}}(n,r)=\{G^{(1)}(n,r), G^{(2)}(n,r)\}$.
 We denote by $C_n$  the the cycle which consists of $n$ vertices, and $G^{(3)}(4,1)$, $G^{(4)}(4,1)$ the sink-sourse orientations of $C_4$.  Let  $\mathcal{G}^{*}(n,r)= \begin{cases}\tilde{\mathcal{G}}(n,r)\bigcup \{G^{(3)}(4,1), G^{(4)}(4,1)\}, & \text { if } a=1 ~and ~(n,r)=(4,1) \\ \tilde{\mathcal{G}}(n,r), & \text { otherwise}  \end{cases}.$

\begin{figure}[ht]
\begin{center}
  \includegraphics[width=10cm,height=3cm]{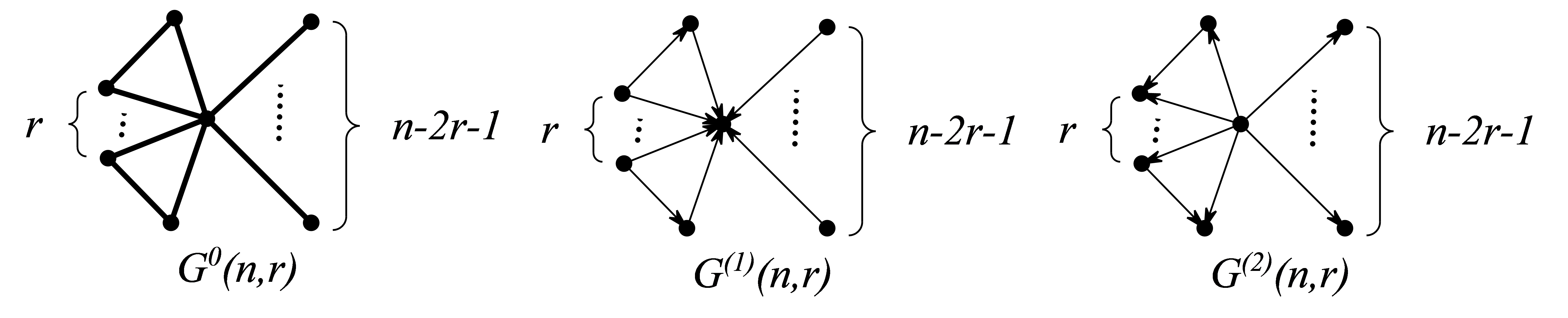}
 \end{center}
\vskip -0.5cm
\caption{$G^{0}(n,r)$ and its two orientations}\label{fig-1}
\end{figure}

\begin{lemma}\cite {JM19}\label{lem1}
  If $G$ is a connected  bipartite graph, then $G$ has two sink-source orientations.
\end{lemma}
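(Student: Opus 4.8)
The plan is to reduce the statement to the uniqueness of the bipartition of a connected bipartite graph, by first establishing a clean correspondence between sink-source orientations and bipartitions. First I would observe that in any sink-source orientation $D$ of $G$, every vertex $w$ satisfies $d^{+}_w=0$ or $d^{-}_w=0$, i.e. $w$ is a sink or a source. Hence for every arc $uv\in A(D)$ the tail $u$ must be a source and the head $v$ must be a sink. It follows that no edge can join two sources and no edge can join two sinks, so the set $S$ of source vertices and the set $T$ of sink vertices are each independent, and every edge of $G$ runs between $S$ and $T$. In other words, the ordered pair $(S,T)$ is precisely an (ordered) bipartition of $G$.

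Next I would verify the converse to pin down the count. Given any ordered bipartition $V(G)=X\cup Y$ with every edge joining $X$ to $Y$, orient each edge from its endpoint in $X$ to its endpoint in $Y$; then every $x\in X$ has $d^{-}_x=0$ and is a source, while every $y\in Y$ has $d^{+}_y=0$ and is a sink, so the result is a sink-source orientation. Combined with the previous paragraph, this gives a bijection between sink-source orientations of $G$ and ordered bipartitions of $G$. Thus counting sink-source orientations is the same as counting ordered bipartitions.

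The main obstacle, and the only substantive point, is that a connected bipartite graph has a unique bipartition as an unordered pair $\{X,Y\}$. I would justify this by a standard parity argument: fix a vertex $w$ and color each vertex by the parity of its distance to $w$; connectedness makes this well defined, and along any edge the two endpoints differ in parity, so the coloring is a proper $2$-coloring, while along any path the colors must alternate, forcing any other proper $2$-coloring to coincide with this one up to a global swap. Consequently the only two ordered bipartitions are $(X,Y)$ and $(Y,X)$, yielding exactly the two orientations $D_1$ (all arcs from $X$ to $Y$) and $D_2$ (all arcs reversed). These are distinct because $G$, being connected on at least two vertices, has at least one edge, whose orientation differs in $D_1$ and $D_2$. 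Hence $G$ has exactly two sink-source orientations.
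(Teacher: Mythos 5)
Your proof is correct, and there is nothing in the paper to compare it against: Lemma \ref{lem1} is stated with a citation to \cite{JM19} and no proof is given in this paper, so your argument fills in what the citation leaves out. Your route is the natural one: in a sink-source orientation of a graph with no isolated vertices, every arc runs from a source to a sink, so the source set and sink set form an ordered bipartition; conversely every ordered bipartition, oriented across, yields a sink-source orientation; and a connected bipartite graph has a unique unordered bipartition because a proper $2$-coloring must alternate along paths and is therefore determined, up to a global swap, by its value at a single vertex. This establishes the slightly stronger fact that $G$ has \emph{exactly} two sink-source orientations, each the reversal of the other, which is what the paper actually uses in Lemma \ref{lem7} and beyond. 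The only caveat worth flagging is degenerate: the one-vertex graph is connected and bipartite but has exactly one (empty) sink-source orientation, so the lemma implicitly assumes $G$ has at least one edge; your closing sentence ("connected on at least two vertices") makes the same tacit assumption, which is harmless here since the lemma is only ever applied to graphs with edges.
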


\begin{lemma}\label{lem2}
For any simple connected  graph $G$,
 $D \in \mathcal{O}(G)$, $a\geq 1$, we have
$$R^{0}_{a+1}(D)\leq \frac{R^{0}_{a+1}(G)}{2}$$  with equality  if and only if $D$ is a sink-source orientation of $G$.

\end{lemma}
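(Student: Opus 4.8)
The plan is to convert both sides of the claimed inequality into sums over \emph{vertices} rather than sums over arcs (or edges), so that the problem reduces to a single scalar inequality applied vertex by vertex. First I would rewrite $R^{0}_{a+1}(D)$. In the arc-sum $\frac{1}{2}\sum_{uv\in A}[(d^{+}_u)^{a}+(d^{-}_v)^{a}]$, the term $(d^{+}_u)^{a}$ appears once for each arc leaving $u$, and there are exactly $d^{+}_u$ such arcs; likewise $(d^{-}_v)^{a}$ appears $d^{-}_v$ times, once for each arc entering $v$. Collecting the contributions by their tail vertex and head vertex respectively gives
$$R^{0}_{a+1}(D)=\frac{1}{2}\sum_{u\in V(G)}\big[(d^{+}_u)^{a+1}+(d^{-}_u)^{a+1}\big].$$
On the other side, the definition of the undirected index yields $R^{0}_{a+1}(G)=\sum_{u\in V(G)}(d_G(u))^{a+1}$, and since any orientation satisfies $d_G(u)=d^{+}_u+d^{-}_u$, we have $R^{0}_{a+1}(G)=\sum_{u\in V(G)}(d^{+}_u+d^{-}_u)^{a+1}$.

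With these two identities in hand, the inequality $R^{0}_{a+1}(D)\le \tfrac{1}{2}R^{0}_{a+1}(G)$ becomes the statement $\sum_{u}[(d^{+}_u)^{a+1}+(d^{-}_u)^{a+1}]\le \sum_{u}(d^{+}_u+d^{-}_u)^{a+1}$, which I would prove termwise. The key elementary fact is that for nonnegative reals $x,y$ and exponent $p=a+1\ge 2$ one has $x^{p}+y^{p}\le (x+y)^{p}$. I would justify this by normalizing: if $x+y=0$ both sides vanish, and otherwise, setting $\alpha=x/(x+y)$ and $\beta=y/(x+y)$ with $\alpha+\beta=1$ and $\alpha,\beta\in[0,1]$, the bound reduces to $\alpha^{p}+\beta^{p}\le 1$, which holds because $\alpha^{p}\le\alpha$ and $\beta^{p}\le\beta$ whenever $p\ge 1$. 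Applying this with $x=d^{+}_u$, $y=d^{-}_u$ at each vertex and summing gives the desired inequality.

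For the equality characterization I would track the termwise bound. Since $p=a+1\ge 2>1$, the inequality $\alpha^{p}\le\alpha$ is strict whenever $\alpha\in(0,1)$; hence $x^{p}+y^{p}=(x+y)^{p}$ holds if and only if $x=0$ or $y=0$. Thus equality at vertex $u$ forces $d^{+}_u=0$ or $d^{-}_u=0$, and global equality requires this at every vertex, which is precisely the defining condition that $D$ be a sink-source orientation of $G$. Conversely, if $D$ is a sink-source orientation, then every vertex attains termwise equality, so the overall bound is tight.

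I do not expect a genuine obstacle here: once the arc-sum is recast as a vertex-sum by the double-counting in the first step, everything reduces to the superadditivity inequality $x^{p}+y^{p}\le(x+y)^{p}$. The only point demanding care is the equality analysis — in particular, noting that the hypothesis $a\ge 1$ makes the exponent $p\ge 2$ strictly larger than $1$, which is exactly what is needed to pin down the equality case as the sink-source orientations.
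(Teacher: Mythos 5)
Your proof is correct, but it takes a genuinely different route from the paper's. The paper argues arc by arc: for each arc $uv\in A(D)$ it bounds $(d^{+}_{D}(u))^{a}+(d^{-}_{D}(v))^{a}\le (d_{G}(u))^{a}+(d_{G}(v))^{a}$ using $d_{G}(w)\ge \max\{d^{+}_{D}(w),d^{-}_{D}(w)\}$ together with the monotonicity of $t\mapsto t^{a}$, then sums over the arcs of $D$ (which are in bijection with the edges of $G$); equality forces $d_{G}(w)=\max\{d^{+}_{D}(w),d^{-}_{D}(w)\}$ at every vertex, i.e.\ a sink-source orientation. You instead collapse both sides into vertex sums, $R^{0}_{a+1}(D)=\frac{1}{2}\sum_{u}[(d^{+}_{u})^{a+1}+(d^{-}_{u})^{a+1}]$ and $R^{0}_{a+1}(G)=\sum_{u}(d_{G}(u))^{a+1}$, and then apply the superadditivity $x^{p}+y^{p}\le (x+y)^{p}$ for $p\ge 1$ at each vertex, with strictness for $p>1$ pinning down the equality case; your double-counting identity and the normalization argument are both sound, and the equality characterization matches the paper's. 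What each approach buys: your vertex-wise reduction is arguably cleaner, turns the equality analysis into a one-variable calculus fact, and in fact works for every $a>0$, not only $a\ge 1$. On the other hand, it exploits the special separable form $\varphi(i,j)=i^{a}+j^{a}$ of this particular index, which is exactly what lets the arc sum telescope into vertex powers; the paper's arc-wise comparison needs only that $\varphi$ be nondecreasing in each argument, so it extends verbatim to other VDB indices $\varphi(d^{+}_{u},d^{-}_{v})$ over orientations, which is the broader setting this paper and its references work in.
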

\begin{proof}
 $R^{0}_{a+1}(D)=\frac{1}{2}\sum\limits_{uv\in A(D)}[(d^{+}_{D}(u))^a+(d^{-}_{D}(v))^a]$, $R^{0}_{a+1}(G)=\sum\limits_{uv\in E(G)}[(d_{G}(u))^a+(d_{G}(v))^a]$. And for any $u\in V(G)$, we have $d_{G}(u)\geq \max\{d^{+}_{D}(u), d^{-}_{D}(u)\}$. So, $$R^{0}_{a+1}(D)=\frac{1}{2}\sum\limits_{uv\in A(D)}[(d^{+}_{D}(u))^a+(d^{-}_{D}(v))^a]\leq \frac{1}{2}\sum\limits_{uv\in E(G)}[(d_{G}(u))^a+(d_{G}(v))^a]= \frac{1}{2}R^{0}_{a+1}(G)$$
 with equality  if and only if $d_{G}(u)= \max\{d^{+}_{D}(u), d^{-}_{D}(u)\}$ for any $u\in V(G)$ which implies  that $D$ is a  sink-source orientation of $G$.
\end{proof}
\begin{lemma}\label{lem4}
 Let $G\in \mathcal{G}(n,r)$, $n\geq 6$, $r\geq 2$, $\delta(G)\geq 2$.  If there exists $u_0u_1\in E(G)$ such that
 $d_{G}(u_0)=d_{G}(u_1)=2$, $\{u_2\}=N_{G}(u_0)\backslash \{u_1\}$ and $d_{G}(u_2)=d \geq 3$.  $u_1u_2\notin E(G)$, $G'=G-u_0+u_1u_2$, $G''=G-u_0$,  $D \in \mathcal{O}(G)$, $D' \in \mathcal{O}(G')$, $D'' \in \mathcal{O}(G'')$, such that  $A(D')\cap A(D)=A(D'')$, $a\geq 1$, then  $$R^{0}_{a+1}(D)-R^{0}_{a+1}(D')\leq \frac{1}{2}[2^{a+1}-1+d^{a+1}-(d-1)^{a+1}]$$
    with equality  if and only if $u_1u_0, u_2u_0\in A(D), u_1u_2\in A(D'), d^{+}_{D}(u_2)=d_{G}(u_2)$; or  $u_0u_1, u_0u_2\in A(D), u_2u_1\in A(D'), d^{-}_{D}(u_2)=d_{G}(u_2)$; or $u_0u_1, u_2u_0\in A(D), u_1u_2\in A(D'),d^{-}_{D}(u_1)=d_{G}(u_1), d^{+}_{D}(u_2)=d_{G}(u_2)$; or $u_1u_0, u_0u_2\in A(D), u_2u_1\in A(D'), d^{+}_{D}(u_1)=d_{G}(u_1), d^{-}_{D}(u_2)=d_{G}(u_2)$.
\end{lemma}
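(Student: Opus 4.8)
The plan is to first pass to a vertex indexed form of the index. Grouping, at each tail $u$, the $d^{+}_u$ equal terms $(d^{+}_u)^a$ coming from the arcs leaving $u$ (and symmetrically at heads) turns the arc sum into
\[
R^{0}_{a+1}(D)=\frac12\sum_{w\in V(D)}\bigl[(d^{+}_{D}(w))^{a+1}+(d^{-}_{D}(w))^{a+1}\bigr],
\]
and likewise for $D'$. Put $f(x)=x^{a+1}$; since $a\ge1$, $f$ is strictly convex on $[0,\infty)$ with $f(0)=0$. Because $A(D')\cap A(D)=A(D'')$, every vertex other than $u_0,u_1,u_2$ has the same in- and out-degree in $D$ as in $D'$, so its contribution cancels in the difference. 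Hence $2\bigl[R^{0}_{a+1}(D)-R^{0}_{a+1}(D')\bigr]=A+B+C$, where $A$ is the (purely $D$-side) contribution of $u_0$ and $B,C$ are the $D$-minus-$D'$ contributions of $u_1,u_2$.

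Next I would parametrize the three free orientation choices: let $\alpha,\beta\in\{0,1\}$ record whether $u_0u_1,u_0u_2$ point towards $u_0$ in $D$, and $\gamma\in\{0,1\}$ whether $u_1u_2$ points from $u_1$ to $u_2$ in $D'$; let $(p_1,q_1)$, $(p_2,q_2)$ be the out/in-degrees of $u_1,u_2$ inside $D''$, so $p_1+q_1=1$ and $p_2+q_2=d-1$. Direct substitution gives $A=h(\alpha+\beta)$, $B=h(p_1+\alpha)-h(p_1+\gamma)$ and $C=H(p_2+\beta)-H(p_2+1-\gamma)$, where $h(s)=f(s)+f(2-s)$ and $H(s)=f(s)+f(d-s)$. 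The elementary facts I record are: $h$ and $H$ are convex and symmetric, so $h\le f(2)=2^{a+1}$ with minimum $h(1)=2$ and $H\le f(d)=d^{a+1}$; and, writing $\phi(x)=f(x+1)-f(x)$, the map $\phi$ is strictly increasing, with $\phi(1)=2^{a+1}-1$ and $\phi(d-1)=d^{a+1}-(d-1)^{a+1}$.

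I would then establish two pointwise estimates. Resolving $\beta+\gamma-1\in\{-1,0,1\}$ shows $C$ equals $0$ (when $\beta\ne\gamma$) or $\pm[\phi(p_2)-\phi(d-1-p_2)]$ (when $\beta=\gamma$); monotonicity of $\phi$ then gives $C\le\phi(d-1)-\phi(0)=d^{a+1}-(d-1)^{a+1}-1$ in every case. The core is a dichotomy on whether $\beta=\gamma$. If $\beta=\gamma$, a short check of the two values of $\alpha$ shows $A+B\le2^{a+1}$: either $\alpha$ makes $u_0$ a source/sink and $B=0$, or the gain in $B$ is exactly offset by the drop in $A=h(\alpha+\beta)$; adding the bound on $C$ closes this case. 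If $\beta\ne\gamma$, then $C=0$ and the crude bounds $A\le2^{a+1}$, $B\le2^{a+1}-2$ give $A+B\le2^{a+2}-2$; here I invoke $d\ge3$, which via $\phi(1)<\phi(d-1)$ yields $2^{a+2}-2<2^{a+1}+d^{a+1}-(d-1)^{a+1}-1$, so the target bound holds strictly.

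The main obstacle is exactly this coupling: $\beta$ and $\gamma$ enter the $u_0$, $u_1$ and $u_2$ terms simultaneously, so the three contributions cannot be maximized independently—pushing $B$ to its maximum forces $C=0$, and conversely—and the dichotomy on $\beta=\gamma$ is what disentangles them (the hypothesis $d\ge3$ being what saves the $\beta\ne\gamma$ branch). For equality I would retrace the inequalities: equality forces $\beta=\gamma$, forces $C$ to its maximum (so $p_2\in\{0,d-1\}$, making $u_2$ a source with $d^{+}_{D}(u_2)=d$ or a sink with $d^{-}_{D}(u_2)=d$) and $A+B=2^{a+1}$ (pinning the orientations of $u_0u_1,u_0u_2$, and when $u_0$ is not a source/sink forcing $p_1$ so that $d^{-}_{D}(u_1)=2$ or $d^{+}_{D}(u_1)=2$ as appropriate). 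Collecting the resulting possibilities reproduces precisely the four configurations in the statement, while the branch $\beta\ne\gamma$ lies strictly below the bound and yields no equality.
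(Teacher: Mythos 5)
Your proposal is correct, but it follows a genuinely different route from the paper's proof. The paper works entirely with the arc-sum definition $R^{0}_{a+1}(D)=\frac{1}{2}\sum_{uv\in A}[(d^{+}_{u})^{a}+(d^{-}_{v})^{a}]$ and splits into eight explicit cases according to the orientations of $u_0u_1$, $u_0u_2$ in $D$ and $u_1u_2$ in $D'$; in each case it tracks sums over the neighbours $w_i$ of $u_1$ and $v_i$ of $u_2$, bounds the telescoping terms one by one, and at the end merges the eight conclusions (four of which are dispatched ``similarly to'' earlier ones). You instead begin with the vertex form $R^{0}_{a+1}(D)=\frac{1}{2}\sum_{w\in V(D)}[(d^{+}_{D}(w))^{a+1}+(d^{-}_{D}(w))^{a+1}]$ --- the digraph analogue of the regrouping the paper states only for undirected graphs and never exploits for orientations --- which makes the localization of the difference to $u_0,u_1,u_2$ immediate and removes the neighbour sums altogether. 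Your parametrization by $\alpha,\beta,\gamma,p_1,p_2$ then compresses the paper's eight cases into a single dichotomy on $\beta=\gamma$ versus $\beta\neq\gamma$, with the bound coming from convexity and symmetry of $h$, $H$ and strict monotonicity of $\phi$; I checked the arithmetic ($A=h(\alpha+\beta)$, $B=h(p_1+\alpha)-h(p_1+\gamma)$, $C=H(p_2+\beta)-H(p_2+1-\gamma)$, the offsetting of $A$ against $B$ when $\beta=\gamma$, and the strictness $\phi(1)<\phi(d-1)$ when $\beta\neq\gamma$), and your equality analysis reproduces exactly the paper's four configurations, including the translation of $p_1\in\{0,1\}$ and $p_2\in\{0,d-1\}$ into the conditions $d^{\pm}_{D}(u_1)=d_G(u_1)$ and $d^{\pm}_{D}(u_2)=d_G(u_2)$. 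What your approach buys: the coupling between the three vertices is made structural rather than enumerative, the equality cases fall out of strict monotonicity instead of being collected case by case, and the same template applies verbatim to any orientation index of the form $\sum_{w}[g(d^{+}_{w})+g(d^{-}_{w})]$ with $g$ convex and $g(0)=0$. What the paper's style buys is uniformity with its other lemmas, at the cost of length and repetition. In a final write-up you should state the vertex-form identity with its one-line derivation (each vertex $u$ contributes $(d^{+}_{u})^{a}$ once per outgoing arc), and add the one-line substitution showing each of the four configurations actually attains the bound, so that sufficiency as well as necessity of the equality conditions is on record.
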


\begin{proof}
Let $N_{G}(u_1)\backslash \{u_0\}=\{w_1\}$, $N_{G}(u_2)\backslash \{u_0\}=\{v_1,v_2,\cdot\cdot\cdot,v_{d_{G}(u_2)-1} \}$. It is straightforward to check  that $d^{+}_{D}(w_1)=d^{+}_{D'}(w_1)$, $d^{-}_{D}(w_1)=d^{-}_{D'}(w_1)$; $d^{+}_{D}(v_i)=d^{+}_{D'}(v_i)$, $d^{-}_{D}(v_i)=d^{-}_{D'}(v_i)$ $(i=1,2,\cdot\cdot\cdot,d_{G}(u_2)-1)$.
Let $p_i=d^{+}_{D}(u_i)$, $i=0,1,2$; $q_i=d^{-}_{D}(u_i)$, $i=0,1,2$; $p'_{i}=d^{+}_{D'}(u_i)$, $i=0,1,2$; $q'_{i}=d^{-}_{D'}(u_i)$, $i=0,1,2$.
We distinguish the following eight cases to prove the lemma.

\textbf{Case 1}. $u_0u_1, u_0u_2\in A(D)$, $u_1u_2\in A(D')$.

 For $u_0u_1, u_0u_2\in A(D)$, we have  that
 $p_0=p'_0+2=0+2=2$, $q_0=q'_0=0$;
  For $u_0u_1\in A(D)$, $u_1u_2\in A(D')$, we have  that
  $p_1=p'_1-1$,
 $q_1=q'_1+1$;
 For $u_0u_2\in A(D)$, $u_1u_2\in A(D')$, we have  that
  $q_2=q'_2$, $p_2=p'_2$.
Hence,
\begin{equation*}
\begin{aligned}
R^{0}_{a+1}(D)-R^{0}_{a+1}(D')=& \frac{1}{2}[\varphi(p_0, q_1)+\varphi(p_0, q_2)-\varphi(p'_1, q'_2)+\sum^{q_1-1}_{i=1}[\varphi(q_1, d^{+}_{D}(w_i))\\
&-\varphi(q'_1, d^{+}_{D'}(w_i))]+\sum^{d_{G}(u_1)-1}_{i=q_1}[\varphi(p_1, d^{-}_{D}(w_i))-\varphi(p'_1, d^{-}_{D'}(w_i))]\\
&+\sum^{d_{G}(u_2)-1}_{i=q_2}[\varphi(p_2, d^{-}_{D}(v_i))-\varphi(p'_2, d^{-}_{D'}(v_i))]+\sum^{q_2-1}_{i=1}[\varphi(q_2,d^{+}_{D}(v_i))\\
&-\varphi(q'_2, d^{+}_{D'}(v_i))]]\\
=& \frac{1}{2}[(p_0)^{a}+(q_1)^{a}+(p_0)^{a}+(q_2)^{a}-(p_1+1)^{a}-(q_2)^{a}+(q_1-1)\\
&[(q_1)^{a}-(q_1-1)^{a}]+(q_2-1)[(q_2)^{a}-(q_2)^{a}]]\\
\leq& \frac{1}{2}[(p_0)^{a}+(d_{G}(u_1))^{a}+(p_0)^{a}-1+(d_{G}(u_1)-1)[(d_{G}(u_1))^{a}\\
&-(d_{G}(u_1)-1)^{a}]]\\
\leq& \frac{1}{2}[2^a+2^a+2^a-1+2^a-1] \\
=& 2^{a+1}-1
\end{aligned}
\end{equation*}
Let $f(d)=\frac{1}{2}[2^{a+1}-1+d^{a+1}-(d-1)^{a+1}]-(2^{a+1}-1)=\frac{1}{2}[d^{a+1}-(d-1)^{a+1}+1-2^{a+1}]$, then $f'(d)=\frac{1}{2}[(a+1)d^{a}-(a+1)(d-1)^{a}]>0$. Hence $\min\limits_{d\geq 3} f(d)=f(3)=\frac{1}{2}[3^{a+1}-2^{a+2}+1]\geq \frac{1}{2}[9\times 3^{a-1}-8\times 2^{a-1}]>0 $, we have
$$R^{0}_{a+1}(D)-R^{0}_{a+1}(D')\leq  2^{a+1}-1< \frac{1}{2}[2^{a+1}-1+d^{a+1}-(d-1)^{a+1}].$$

\textbf{Case 2}. $u_0u_1, u_2u_0\in A(D)$, $u_1u_2\in A(D')$.

For $u_0u_1, u_2u_0\in A(D)$, we have  that
 $p_0=p'_0+1=0+1=1$, $q_0=q'_0+1=0+1=1$;
For $u_0u_1 \in A(D)$,$u_1u_2\in A(D')$, we have  that
 $p_1=p'_1-1$,  $q_1=q'_1+1$;
For $u_2u_0 \in A(D)$,$u_1u_2\in A(D')$, we have  that
 $p_2=p'_2+1$,
 $q_2=q'_2-1$.
Hence,
\begin{equation*}
\begin{aligned}
R^{0}_{a+1}(D)-R^{0}_{a+1}(D')=& \frac{1}{2}[(p_0)^{a}+(q_1)^{a}+(p_2)^{a}+(q_0)^{a}-(p_1+1)^{a}\\
&-(q_2+1)^{a}+(q_1-1)[(q_1)^{a}-(q_1-1)^{a}]\\
&+p_1[(p_1)^{a}-(p_1+1)^{a}]+(p_2-1)[(p_2)^{a}\\
&-(p_2-1)^{a}]+q_2[(q_2)^{a}-(q_2+1)^{a}]]\\
\leq& \frac{1}{2}[1+(d_{G}(u_1))^{a}+(d_{G}(u_2))^{a}+1-1-1\\
&+(d_{G}(u_1)-1)[(d_{G}(u_1))^{a}-(d_{G}(u_1)-1)^{a}]\\
&+(d_{G}(u_2)-1)[(d_{G}(u_2))^{a}-(d_{G}(u_2)-1)^{a}]]\\
=& \frac{1}{2}[2^{a+1}-1+d^{a+1}-(d-1)^{a+1}]
\end{aligned}
\end{equation*}
 with equality  if and only if $q_1=d_{G}(u_1)$, $p_2=d_{G}(u_2)$.

\textbf{Case 3}. $u_1u_0, u_0u_2\in A(D)$, $u_1u_2\in A(D')$.

For  $u_1u_0, u_0u_2\in A(D)$, we have  that
 $p_0=p'_0+1=0+1=1$, $q_0=q'_0+1=0+1=1$;
For  $u_1u_0\in A(D)$, $u_1u_2\in A(D')$, we have  that
  $q_1=q'_1$, $p_1=p'_1$;
For  $u_0u_2\in A(D)$, $u_1u_2\in A(D')$, we have  that
 $p_2=p'_2$,
  $q_2=q'_2$.
Hence,
\begin{equation*}
\begin{aligned}
R^{0}_{a+1}(D)-R^{0}_{a+1}(D')=& \frac{1}{2}[(p_1)^{a}+(q_0)^{a}+(p_0)^{a}+(q_2)^{a}-(p'_1)^{a}-(q'_2)^{a}]\\
=& \frac{1}{2}[(q_0)^{a}+(p_0)^{a}] \\
=& 1< \frac{1}{2}[2^{a+1}-1+d^{a+1}-(d-1)^{a+1}]
\end{aligned}
\end{equation*}

\textbf{Case 4}. $u_1u_0, u_2u_0\in A(D)$, $u_1u_2\in A(D')$.

For $u_1u_0, u_2u_0\in A(D)$, we have  that
 $p_0=p'_0=0$, $q_0=q'_0+2=0+2=2$;
For $u_1u_0\in A(D)$, $u_1u_2\in A(D')$, we have  that
 $p_1=p'_1$, $q_1=q'_1$;
For $u_2u_0\in A(D)$, $u_1u_2\in A(D')$, we have  that
 $p_2=p'_2+1$,
  $q_2=q'_2-1$.
Hence,
\begin{equation*}
\begin{aligned}
R^{0}_{a+1}(D)-R^{0}_{a+1}(D')=& \frac{1}{2}[(p_1)^{a}+(q_0)^{a}+(p_2)^{a}+(q_0)^{a}-(p_1)^{a}-(q_2+1)^{a}+(p_2-1)[(p_2)^{a}\\
&-(p_2-1)^{a}]+q_2[(q_2)^{a}-(q_2+1)^{a}]]\\
\leq& \frac{1}{2}[2^{a}+(d_{G}(u_2))^{a}+2^{a}-1+(d_{G}(u_2)-1)[(d_{G}(u_2))^{a}\\
&-(d_{G}(u_2)-1)^{a}]]\\
=& \frac{1}{2}[2^a+d^a+2^a-1+(d-1)(d^a-(d-1)^a)] \\
=& \frac{1}{2}[2^{a+1}-1+d^{a+1}-(d-1)^{a+1}]
\end{aligned}
\end{equation*}
 with equality if and only if $p_2=d_{G}(u_2)$.

\textbf{Case 5}. $u_0u_1, u_0u_2\in A(D)$, $u_2u_1\in A(D')$.

Similarly to \textbf{Case 4}, we have $$R^{0}_{a+1}(D)-R^{0}_{a+1}(D')\leq \frac{1}{2}[2^{a+1}-1+d^{a+1}-(d-1)^{a+1}]$$
with equality if and only if $q_2=d_{G}(u_2)$.

\textbf{Case 6}. $u_0u_1, u_2u_0\in A(D)$, $u_2u_1\in A(D')$.

Similarly to \textbf{Case 3}, we have $$R^{0}_{a+1}(D)-R^{0}_{a+1}(D')< \frac{1}{2}[2^{a+1}-1+d^{a+1}-(d-1)^{a+1}].$$

\textbf{Case 7}. $u_1u_0, u_0u_2\in A(D)$, $u_2u_1\in A(D')$.

Similarly to \textbf{Case 2}, we have $$R^{0}_{a+1}(D)-R^{0}_{a+1}(D')\leq \frac{1}{2}[2^{a+1}-1+d^{a+1}-(d-1)^{a+1}]$$
  with equality if and only if $p_1=d_{G}(u_1)$, $q_2=d_{G}(u_2)$.

\textbf{Case 8}. $u_1u_0, u_2u_0\in A(D)$, $u_2u_1\in A(D')$.

Similarly to \textbf{Case 1}, we have $$R^{0}_{a+1}(D)-R^{0}_{a+1}(D')<\frac{1}{2}[2^{a+1}-1+d^{a+1}-(d-1)^{a+1}].$$

Consequently, $$R^{0}_{a+1}(D)-R^{0}_{a+1}(D')\leq \frac{1}{2}[2^{a+1}-1+d^{a+1}-(d-1)^{a+1}]$$
    with equality if and only if $u_1u_0, u_2u_0\in A(D), u_1u_2\in A(D'), d^{+}_{D}(u_2)=d_{G}(u_2)$; or  $u_0u_1, u_0u_2\in A(D), u_2u_1\in A(D'), d^{-}_{D}(u_2)=d_{G}(u_2)$; or $u_0u_1, u_2u_0\in A(D), u_1u_2\in A(D'), d^{-}_{D}(u_1)=d_{G}(u_1), d^{+}_{D}(u_2)=d_{G}(u_2)$; or $u_1u_0, u_0u_2\in A(D), u_2u_1\in A(D'), d^{+}_{D}(u_1)=d_{G}(u_1), d^{-}_{D}(u_2)=d_{G}(u_2)$.
\end{proof}

\begin{lemma}\label{lem5}
Let $G\in \mathcal{G}(n,r)$ with $n\geq 6$
and $r\geq 2$, $\delta(G)\geq 2$. If there exists $u_0u_1\in E(G)$ such that
 $d_{G}(u_0)=d_{G}(u_1)=2$, $\{u_2\}=N_{G}(u_0)\backslash \{u_1\}$ and $d_{G}(u_2)=d\geq 3$. $u_1u_2\in E(G)$, $G'=G-u_0-u_1$, $D \in \mathcal{O}(G)$, $D' \in \mathcal{O}(G')$ such that $A(D')\cap A(D)=A(D')$, $a\geq 1$,  then  $$R^{0}_{a+1}(D)-R^{0}_{a+1}(D')\leq
 \frac{1}{2}[2^{a+1}+2+d^{a+1}-(d-2)^{a+1}]$$
    with equality if and only if $d^{-}_{D}(u_2)=d_{G}(u_2)$  or $d^{+}_{D}(u_2)=d_{G}(u_2)$.

\end{lemma}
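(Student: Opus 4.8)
The plan is to first replace the arc-based definition of $R^{0}_{a+1}$ by an equivalent vertex-based sum, which makes all of the bookkeeping transparent. Grouping the terms $(d^{+}_{u})^{a}$ and $(d^{-}_{v})^{a}$ according to the tail and the head of each arc, and noting that a vertex $u$ is the tail of exactly $d^{+}_{D}(u)$ arcs and the head of exactly $d^{-}_{D}(u)$ arcs, gives
$$R^{0}_{a+1}(D)=\frac{1}{2}\sum_{u\in V(D)}\big[(d^{+}_{D}(u))^{a+1}+(d^{-}_{D}(u))^{a+1}\big].$$
Since $G'$ arises from $G$ by deleting $u_0,u_1$ together with the two edges $u_0u_2,u_1u_2$, and since $A(D')\cap A(D)=A(D')$ forces every surviving vertex, in particular each neighbour $v_1,\dots,v_{d-2}$ of $u_2$, to keep its out- and in-degree, only $u_0,u_1,u_2$ contribute to the difference. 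Writing $p_i=d^{+}_{D}(u_i)$, $q_i=d^{-}_{D}(u_i)$ for $i=0,1,2$ and $p'_2=d^{+}_{D'}(u_2)$, $q'_2=d^{-}_{D'}(u_2)$, I obtain
$$R^{0}_{a+1}(D)-R^{0}_{a+1}(D')=\tfrac{1}{2}\big[p_0^{a+1}+q_0^{a+1}+p_1^{a+1}+q_1^{a+1}+p_2^{a+1}+q_2^{a+1}-(p'_2)^{a+1}-(q'_2)^{a+1}\big].$$

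Next I record the constraints $p_0+q_0=2$, $p_1+q_1=2$, $p_2+q_2=d$, and the elementary fact that, as $t\mapsto t^{a+1}$ is convex for $a\ge 1$, any split of a degree-$2$ vertex satisfies $p^{a+1}+q^{a+1}\le 2^{a+1}$ (equal to $2^{a+1}$ at $(2,0)$ or $(0,2)$, equal to $2$ at $(1,1)$). The case analysis is then organized by how the two triangle edges $u_0u_2$ and $u_1u_2$ are oriented at $u_2$: if both point into $u_2$ then $q'_2=q_2-2$, $p'_2=p_2$; if both point out then $p'_2=p_2-2$, $q'_2=q_2$; and if one points each way then $p'_2=p_2-1$, $q'_2=q_2-1$. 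In the first (sink-at-$u_2$) case the orientation of the edge $u_0u_1$ forces exactly one of $u_0,u_1$ to be a local source and the other to be of type $(1,1)$, so the $u_0,u_1$ terms sum to exactly $2^{a+1}+2$, while the $u_2$ term is $q_2^{a+1}-(q_2-2)^{a+1}$ with $2\le q_2\le d$; the second case is symmetric with $p_2$ in place of $q_2$.

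The heart of the argument is then monotonicity and convexity. Setting $g(x)=x^{a+1}-(x-2)^{a+1}$, one has $g'(x)=(a+1)[x^{a}-(x-2)^{a}]>0$, so $g$ is increasing and the sink case yields $\tfrac12[2^{a+1}+2+g(q_2)]\le\tfrac12[2^{a+1}+2+d^{a+1}-(d-2)^{a+1}]$, with equality precisely when $q_2=d$, i.e. $d^{-}_{D}(u_2)=d_G(u_2)$; the source case gives equality precisely when $p_2=d$, i.e. $d^{+}_{D}(u_2)=d_G(u_2)$. For the remaining mixed case the $u_2$ term equals $h(p_2)+h(q_2)$ with $h(x)=x^{a+1}-(x-1)^{a+1}$ convex and increasing, hence maximized at the extreme split $\{1,d-1\}$, while the $u_0,u_1$ terms sum to either $2^{a+2}$ or $4$; a short estimate reduces the desired strict inequality to $2^{a+1}-1<d^{a+1}-(d-1)^{a+1}$ (true for all $d\ge 3$, $a\ge 1$, since the right side is increasing in $d$ and $2^{a+2}-1<3^{a+1}$ holds at $d=3$) and to the trivial $3<2^{a+1}+d^{a+1}-(d-1)^{a+1}$, so this case is strictly below the bound. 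I expect the main obstacle to be purely organizational: matching each of the eight triangle orientations to one of the three degree-change patterns and checking in every instance that the $u_0,u_1$ contribution is pinned down by the edge $u_0u_1$, so that equality is attained exactly in the two sink/source configurations claimed.
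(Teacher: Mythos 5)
Your proposal is correct, and it takes a genuinely different route from the paper. The paper works directly from the arc-based definition and runs through eight cases, one for each orientation of the three triangle edges $u_0u_1,u_0u_2,u_1u_2$; in every case it must track the arcs between $u_2$ and its remaining neighbours $v_1,\dots,v_{d-2}$ via telescoping sums of the form $\sum_i[\varphi(q_2,d^{+}_{D}(v_i))-\varphi(q'_2,d^{+}_{D'}(v_i))]$, and it closes each case with ad hoc auxiliary functions. Your first step --- the identity $R^{0}_{a+1}(D)=\tfrac12\sum_{u\in V(D)}[(d^{+}_{D}(u))^{a+1}+(d^{-}_{D}(u))^{a+1}]$, obtained by grouping the arc sum over tails and heads --- removes all of that bookkeeping at once: since $D'$ is the restriction of $D$, the difference depends only on the degree pairs at $u_0,u_1,u_2$, and the neighbours $v_i$ never appear. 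This collapses the case analysis from eight orientations of the triangle to three patterns at $u_2$ (sink, source, mixed), with the edge $u_0u_1$ absorbed by the observation that a split of $2$ contributes either $2^{a+1}$ or $2$; your verification that the sink/source cases give exactly $2^{a+1}+2$ from $u_0,u_1$ plus the increasing function $g(q_2)=q_2^{a+1}-(q_2-2)^{a+1}$, and that the mixed case is strictly below the bound (via convexity of $h(x)=x^{a+1}-(x-1)^{a+1}$ and the two elementary inequalities $2^{a+2}-1<3^{a+1}$ and $3<2^{a+1}+d^{a+1}-(d-1)^{a+1}$), is sound, and your equality characterization coincides with the lemma's, since $d^{-}_{D}(u_2)=d_G(u_2)$ forces the sink pattern and conversely. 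What the paper's approach buys is only that it stays entirely within the arc-based definition as stated; what yours buys is a shorter, less error-prone argument whose key identity (the directed analogue of $R^{0}_{a+1}(G)=\sum_u d_G(u)^{a+1}$) would equally simplify Lemmas \ref{lem4} and \ref{lem102}.
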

\begin{proof}
Let $N_{G}(u_2)\backslash \{u_0,u_1\}=\{v_1,v_2,\cdot\cdot\cdot,v_{d_{G}(u_2)-2} \}$. It is obvious that $d^{+}_{D}(v_i)=d^{+}_{D'}(v_i)$, $d^{-}_{D}(v_i)=d^{-}_{D'}(v_i)$ $(i=1,2,\cdot\cdot\cdot,d_{G}(u_2)-2)$.
Let $p_i=d^{+}_{D}(u_i)$, $i=0,1,2$; $q_i=d^{-}_{D}(u_i)$, $i=0,1,2$; $p'_{i}=d^{+}_{D'}(u_i)$, $i=0,1,2$; $q'_{i}=d^{-}_{D'}(u_i)$, $i=0,1,2$.
According to $u_0,u_1,u_2$, we have eight cases.

\textbf{Case 1}. $u_0u_1, u_0u_2\in A(D)$, $u_1u_2\in A(D)$.

For $u_0u_1, u_0u_2\in A(D)$, we have  that
 $p_0=p'_0+2=2$, $q_0=q'_0=0$;
For $u_0u_1\in A(D)$, $u_1u_2\in A(D)$, we have  that
 $p_1=p'_1+1=0+1=1$, $q_1=q'_1+1=0+1=1$;
For $ u_0u_2\in A(D)$, $u_1u_2\in A(D)$, we have  that
 $p_2=p'_2$,
  $q_2=q'_2+2$.
Hence,

\begin{equation*}
\begin{aligned}
R^{0}_{a+1}(D)-R^{0}_{a+1}(D')=& \frac{1}{2}[\varphi(p_0,q_1)+\varphi(p_0,q_2)+\varphi(p_1,q_2)+\sum^{q_2-2}_{i=1}[\varphi(q_2,d^{+}_{D}(v_i))\\
&-\varphi(q'_2,d^{+}_{D'}(v_i))]+\sum^{d_{G}(u_2)-2}_{i=q_2-1}[\varphi(p_2,d^{-}_{D}(v_i))-\varphi(p'_2,d^{-}_{D'}(v_i))]]\\
=& \frac{1}{2}[(p_0)^{a}+(q_1)^{a}+(p_0)^{a}+(q_2)^{a}+(p_1)^{a}+(q_2)^{a}+(q_2-2)\\
&[(q_2)^{a}-(q_2-2)^{a}]]\\
\leq& \frac{1}{2}[2^{a}+1^{a}+2^{a}+(d_{G}(u_2))^{a}+1^{a}+(d_{G}(u_2))^{a}\\
&+(d_{G}(u_2)-2)[(d_{G}(u_2))^{a}-(d_{G}(u_2)-2)^{a}]]\\
=& \frac{1}{2}[2^{a+1}+2+d^a+d^a+(d-2)(d^a-(d-2)^a)] \\
=& \frac{1}{2}[2^{a+1}+2+d^{a+1}-(d-2)^{a+1}]
\end{aligned}
\end{equation*}
 with equality if and only if $q_2=d_{G}(u_2)$.

\textbf{Case 2}. $u_0u_1, u_2u_0\in A(D)$, $u_1u_2\in A(D)$.

For $u_0u_1, u_2u_0\in A(D)$, we have  that
 $p_0=p'_0+1=0+1=1$, $q_0=q'_0+1=0+1=1$;
For $u_0u_1\in A(D)$, $u_1u_2\in A(D)$, we have  that
 $p_1=p'_1+1=0+1=1$, $q_1=q'_1+1=0+1=1$;
For $ u_2u_0\in A(D)$, $u_1u_2\in A(D)$, we have  that
  $p_2=p'_2+1$,
  $q_2=q'_2+1$.
Hence,
\begin{equation*}
\begin{aligned}
R^{0}_{a+1}(D)-R^{0}_{a+1}(D')=& \frac{1}{2}[(p_0)^{a}+(q_1)^{a}+(p_2)^{a}+(q_0)^{a}+(p_1)^{a}+(q_2)^{a}+(q_2-1)[(q_2)^{a}\\
&-(q'_2)^{a}]+(p_2-1)[(p_2)^{a}-(p'_2)^{a}]]\\
\leq& \frac{1}{2}[4+(q_2)^{a+1}-(q_2-1)^{a+1}+(p_2)^{a+1}-(p_2-1)^{a+1}]
\end{aligned}
\end{equation*}
 Let $f(x)=\frac{1}{2}[4+x^{a+1}-(x-1)^{a+1}+(d-x)^{a+1}-(d-x-1)^{a+1}]$ and $g(x)=(a+1)x^{a}-(a+1)(x-1)^{a}$ $(1\leq x\leq d-1)$, then $f'(x)=\frac{1}{2}[(a+1)x^{a}-(a+1)(x-1)^{a}-[(a+1)(d-x)^{a}-(a+1)(d-x-1)^{a}]]=
 \frac{1}{2}[g(x)-g(d-x)]$.
By $g'(x)=a(a+1)x^{a-1}-a(a+1)(x-1)^{a-1}\geq 0$,
we have  $f'(x)\geq 0$ for $x\geq d-x$, and $f'(x)\leq 0$ for $x\leq d-x$.
It is obvious that $\max\limits_{1\leq x\leq d-1} f(x)=f(d-1)=f(1)=\frac{1}{2}[4+(d-1)^{a+1}-(d-2)^{a+1}+1]$.
Since
\begin{equation*}
\begin{aligned} &\frac{1}{2}[4+(d-1)^{a+1}-(d-2)^{a+1}+1]-\frac{1}{2}[d^{a+1}-(d-2)^{a+1}+2+2^{a+1}]\\
& = \frac{1}{2}[3+(d-1)^{a+1}-d^{a+1}-2^{a+1}]\\
&<0,
\end{aligned}
\end{equation*}
 $R^{0}_{a+1}(D)-R^{0}_{a+1}(D')<\frac{1}{2}[2^{a+1}+2+d^{a+1}-(d-2)^{a+1}]$.

\textbf{Case 3}. $u_1u_0, u_0u_2\in A(D)$, $u_1u_2\in A(D)$.

For $u_1u_0, u_0u_2\in A(D)$, we have  that
 $p_0=p'_0+1=0+1=1$, $q_0=q'_0+1=0+1=1$;
For $u_1u_0\in A(D)$, $u_1u_2\in A(D)$, we have  that
 $p_1=p'_1+2=0+2=2$, $q_1=q'_1=0$;
For $ u_0u_2\in A(D)$, $u_1u_2\in A(D)$, we have  that
 $p_2=p'_2$,
  $q_2=q'_2+2$.
Hence,
\begin{equation*}
\begin{aligned}
R^{0}_{a+1}(D)-R^{0}_{a+1}(D')=& \frac{1}{2}[(p_1)^{a}+(q_0)^{a}+(p_0)^{a}+(q_2)^{a}+(p_1)^{a}+(q_2)^{a}+(q_2-2)[(q_2)^{a}\\
&-(q_2-2)^{a}]]\\
\leq& \frac{1}{2}[2^{a}+1^{a}+1^{a}+(d_{G}(u_2))^{a}+2^{a}+(d_{G}(u_2))^{a}+(d_{G}(u_2)-2)[(d_{G}(u_2))^{a}\\
&-(d_{G}(u_2)-2)^{a}]]\\
=& \frac{1}{2}[2^{a}+1+1+d^a+2^a+d^a+(d-2)(d^a-(d-2)^a)] \\
=& \frac{1}{2}[2^{a+1}+2+d^{a+1}-(d-2)^{a+1}]
\end{aligned}
\end{equation*}
 with equality if and only if $q_2=d_{G}(u_2)$.

\textbf{Case 4}. $u_1u_0, u_2u_0\in A(D)$, $u_1u_2\in A(D)$.

For $u_1u_0, u_2u_0\in A(D)$, we have  that
 $p_0=p'_0=0$, $q_0=q'_0+2=0+2=2$;
For $u_1u_0\in A(D)$, $u_1u_2\in A(D)$, we have  that
  $p_1=p'_1+2=0+2=2$, $q_1=q'_1=0$;
For $ u_2u_0\in A(D)$, $u_1u_2\in A(D)$, we have  that
 $p_2=p'_2+1$,
  $q_2=q'_2+1$.
Hence, we have the inequality by the proof of \textbf{Case 2}.
\begin{equation*}
\begin{aligned}
R^{0}_{a+1}(D)-R^{0}_{a+1}(D')=& \frac{1}{2}[(p_1)^{a}+(q_0)^{a}+(q_0)^{a}+(p_2)^{a}+(p_1)^{a}+(q_2)^{a}+(q_2-1)\\
&[(q_2)^{a}-(q_2-1)^{a}]+(p_2-1)[(p_2)^{a}-(p_2-1)^{a}]]\\
=& \frac{1}{2}[2^{a+2}+(q_2)^{a+1}-(q_2-1)^{a+1}+(p_2)^{a+1}-(p_2-1)^{a+1}]\\
\leq& \frac{1}{2}[2^{a+2}+(d-1)^{a+1}-(d-2)^{a+1}+1]
\end{aligned}
\end{equation*}
Let  $h(d)=\frac{1}{2}[2^{a+2}+(d-1)^{a+1}-(d-2)^{a+1}+1]-\frac{1}{2}[2^{a+1}+2+d^{a+1}-(d-2)^{a+1}]= \frac{1}{2}[2^{a+1}+(d-1)^{a+1}-d^{a+1}-1]$, we have $h'(d)=\frac{1}{2}[(a+1)(d-1)^{a}-(a+1)d^{a}]< 0$, and   $\max\limits_{d\geq 3}{h(d)}=h(3)=\frac{1}{2}[2^{a+2}-3^{a+1}-1]=\frac{1}{2}[8\times 2^{a-1}-9\times 3^{a-1}-1]< 0$.
Hence, $$R^{0}_{a+1}(D)-R^{0}_{a+1}(D')< \frac{1}{2}[2^{a+1}+2+d^{a+1}-(d-2)^{a+1}].$$

\textbf{Case 5}. $u_0u_1, u_0u_2\in A(D)$, $u_2u_1\in A(D)$.

Similarly to \textbf{Case 4}, we have
$$R^{0}_{a+1}(D)-R^{0}_{a+1}(D')< \frac{1}{2}[2^{a+1}+2+d^{a+1}-(d-2)^{a+1}].$$

\textbf{Case 6}. $u_0u_1, u_2u_0\in A(D)$, $u_2u_1\in A(D)$.

Similarly to \textbf{Case 3}, we have
$$R^{0}_{a+1}(D)-R^{0}_{a+1}(D')\leq   \frac{1}{2}[2^{a+1}+2+d^{a+1}-(d-2)^{a+1}].$$
 with equality if and only if  $p_2=d_{G}(u_2)$.

\textbf{Case 7}. $u_1u_0, u_0u_2\in A(D)$, $u_2u_1\in A(D)$.

Similarly to \textbf{Case 2}, we have
$$R^{0}_{a+1}(D)-R^{0}_{a+1}(D')< \frac{1}{2}[2^{a+1}+2+d^{a+1}-(d-2)^{a+1}].$$

\textbf{Case 8}. $u_1u_0, u_2u_0\in A(D)$, $u_2u_1\in A(D)$.

Similarly to \textbf{Case 1}, we have
$$R^{0}_{a+1}(D)-R^{0}_{a+1}(D')\leq \frac{1}{2}[2^{a+1}+2+d^{a+1}-(d-2)^{a+1}]$$  with equality if and only if $p_2=d_{G}(u_2)$.

Consequently, $$R^{0}_{a+1}(D)-R^{0}_{a+1}(D')\leq \frac{1}{2}[2^{a+1}+2+d^{a+1}-(d-2)^{a+1}]$$
  with equality if and only if $d^{-}_{D}(u_2)=d_{G}(u_2)$  or $d^{+}_{D}(u_2)=d_{G}(u_2)$.
\end{proof}

In the following,  we determine the maximum    zeroth-order general Randi\'{c} index of trees, and the corresponding extreme graph.

\begin{lemma}\cite{4}\label{lem6}
Let $G\in \mathcal{G}(n,0)$ with $n\geq 2$, $a\geq 1$. Then
$$R^{0}_{a+1}(G)\leq n-1+(n-1)^{1+a}$$
 with equality if and only if $G \cong G^{0}(n,0)$.
\end{lemma}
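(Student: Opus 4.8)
The plan is to exploit the fact that $\mathcal{G}(n,0)$ is precisely the set of trees on $n$ vertices (a cactus with no cycle is a tree), and that $G^{0}(n,0)$ is the star $K_{1,n-1}$; a direct computation gives $R^{0}_{a+1}(K_{1,n-1}) = (n-1)^{a+1} + (n-1)$, matching the claimed bound. Writing $R^{0}_{a+1}(G) = \sum_{u \in V(G)} (d_{G}(u))^{a+1}$, I would set $f(x) = x^{a+1}$ and note that for $a \ge 1$ this function is strictly convex and increasing on $[1,\infty)$, since $f''(x) = a(a+1)x^{a-1} > 0$. The problem thus reduces to maximizing $\sum_{u} f(d_{G}(u))$ over all degree sequences realizable by a tree.

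The key structural input is that every tree on $n$ vertices has exactly $n-1$ edges, so its degree sequence $d_1 \ge d_2 \ge \cdots \ge d_n$ satisfies $\sum_{i} d_i = 2(n-1)$ with each $d_i \ge 1$ and $d_1 \le n-1$. I would then show that the star's sequence $(n-1,1,\ldots,1)$ majorizes the degree sequence of an arbitrary tree: for each $k$, since the remaining $n-k$ vertices each contribute at least $1$, we get $\sum_{i=1}^{k} d_i = 2(n-1) - \sum_{i>k} d_i \le 2(n-1) - (n-k) = n+k-2$, which is exactly the $k$-th partial sum of $(n-1,1,\ldots,1)$, and the totals agree at $k=n$.

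With majorization in hand, Karamata's inequality applied to the convex $f$ yields $\sum_{i} f(d_i) \le f(n-1) + (n-1)f(1) = (n-1)^{a+1} + (n-1)$, which is the bound. For the equality case, strict convexity of $f$ (valid for $a \ge 1$) forces equality in Karamata only when the two sorted sequences coincide, i.e. when $G$ has degree sequence $(n-1,1,\ldots,1)$; since a vertex of degree $n-1$ in an $n$-vertex tree is adjacent to all others, which are then necessarily leaves, the unique such tree is the star $G^{0}(n,0)$.

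The only place needing real care is the equality analysis: one must confirm both that strict convexity upgrades Karamata's equality condition to exact agreement of the sorted degree sequences and that the star is the unique tree realizing that sequence. An elementary alternative avoiding Karamata would be to maximize $\sum_{i} x_i^{a+1}$ over the polytope $\{\sum_i x_i = 2(n-1),\ 1 \le x_i \le n-1\}$: a convex function attains its maximum at a vertex, so at most one coordinate is interior, and solving the resulting constraint shows every such vertex is the star sequence. The majorization route, however, is shorter and makes the uniqueness transparent, so I would present that as the main argument.
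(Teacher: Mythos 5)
The paper gives no proof of this lemma at all --- it is imported verbatim from \cite{4} --- so there is no internal argument to compare yours against; what you have written is a correct, self-contained proof that would fill that gap. The reduction to degree sequences is valid since $R^{0}_{a+1}(G)=\sum_{u\in V(G)}(d_G(u))^{a+1}$ depends only on the degrees; your partial-sum estimate $\sum_{i=1}^{k}d_i\le 2(n-1)-(n-k)=n+k-2$ correctly shows that $(n-1,1,\ldots,1)$ majorizes every tree degree sequence (at $k=1$ it reads $d_1\le n-1$, matching the star's first entry, and the totals agree at $k=n$), and Karamata's inequality for the convex $f(x)=x^{a+1}$ then gives the bound, with strict convexity ($f''(x)=a(a+1)x^{a-1}>0$ on $[1,\infty)$ for $a\ge 1$) pinning the equality case down to the star sequence, whose unique tree realization is $G^{0}(n,0)$. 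Two minor points deserve attention: the equality case of Karamata under strict convexity (forcing the sorted sequences to coincide) is standard but should be cited or proved explicitly if your argument is to stand in place of the citation; and in the polytope alternative you sketch, you need strict convexity of the sum to conclude that the maximum is attained \emph{only} at extreme points, otherwise uniqueness of the extremal sequence is lost. Methodologically, your route differs from the pendant-edge-shifting transformations typical of this literature (move a pendant edge from a vertex of smaller degree to one of larger degree, check that the index strictly increases, and induct), which is closer in spirit to how results of this type are established in \cite{4}; the majorization argument is shorter and makes the uniqueness of the star transparent, at the price of invoking Karamata's theorem as a black box.
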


 From the maximum  tree on  the zeroth-order general Randi\'{c} index and the corresponding extremum graph, we get the maximum  oriented  tree with respect to  the zeroth-order general Randi\'{c} index and the corresponding extremum digraph.

\begin{lemma}\label{lem7}
Let $G\in \mathcal{G}(n,0)$ with $n\geq 2$, $D\in \mathcal{O}(G)$, $a\geq 1$. Then $$R^{0}_{a+1}(D)\leq \frac{1}{2}[n-1+(n-1)^{1+a}] $$ with equality if and only if $D\in  \mathcal{G}^{*}(n,0)$.
\end{lemma}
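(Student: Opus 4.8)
The plan is to obtain Lemma~\ref{lem7} as a direct consequence of the two sharp bounds already proved. Recall that $\mathcal{G}(n,0)$ is precisely the class of trees on $n$ vertices. So for any $G\in\mathcal{G}(n,0)$ and any orientation $D\in\mathcal{O}(G)$, I would first invoke Lemma~\ref{lem2} to get
$$R^{0}_{a+1}(D)\le \tfrac{1}{2}R^{0}_{a+1}(G),$$
and then Lemma~\ref{lem6} to bound $R^{0}_{a+1}(G)\le n-1+(n-1)^{1+a}$. Composing these two inequalities immediately yields the claimed upper bound $\tfrac{1}{2}[\,n-1+(n-1)^{1+a}\,]$, with no further computation needed.

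For the equality analysis I would argue that both inequalities must be tight simultaneously. Equality in Lemma~\ref{lem6} forces $G\cong G^{0}(n,0)$, the star $K_{1,n-1}$; one checks directly that $R^{0}_{a+1}(K_{1,n-1})=(n-1)^{a+1}+(n-1)$, matching the bound. Equality in Lemma~\ref{lem2} forces $D$ to be a sink--source orientation of $G$. Hence equality in the composed bound holds exactly when $D$ is a sink--source orientation of the star.

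It then remains to identify these orientations with $\mathcal{G}^{*}(n,0)$. Since the star is connected and bipartite, Lemma~\ref{lem1} guarantees it admits exactly two sink--source orientations: the one with the center as a source (all arcs directed outward) and the one with the center as a sink (all arcs directed inward). These are precisely $G^{(1)}(n,0)$ and $G^{(2)}(n,0)$, so $\tilde{\mathcal{G}}(n,0)=\{G^{(1)}(n,0),G^{(2)}(n,0)\}$ captures them and, by Lemma~\ref{lem1}, there are no others. Because $(n,0)\neq(4,1)$, the defining case distinction gives $\mathcal{G}^{*}(n,0)=\tilde{\mathcal{G}}(n,0)$, which is exactly the equality set in the statement.

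I do not anticipate any real obstacle: the result is essentially a corollary of Lemmas~\ref{lem2} and~\ref{lem6}, glued by Lemma~\ref{lem1}. The only delicate point is the equality characterization, where I must confirm that the two sink--source orientations of the star coincide with $\tilde{\mathcal{G}}(n,0)$ and that Lemma~\ref{lem1} rules out any additional extremal orientations; the exceptional branch of $\mathcal{G}^{*}$ plays no role here since it is triggered only at $(n,r)=(4,1)$.
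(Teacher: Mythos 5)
Your proposal is correct and follows essentially the same route as the paper's own proof: combine Lemma \ref{lem2} (orientation bound, with equality exactly for sink--source orientations) with Lemma \ref{lem6} (the star $G^{0}(n,0)$ maximizes $R^{0}_{a+1}$ over trees), then use Lemma \ref{lem1} to identify the two sink--source orientations of the star with $\mathcal{G}^{*}(n,0)$. Your write-up is in fact slightly more explicit than the paper's on the simultaneous-tightness argument and on why the exceptional $(4,1)$ branch of $\mathcal{G}^{*}$ is irrelevant.
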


\begin{proof}
     It is straghtforward to check that $G$ is a bipartite graph. By Lemma \ref{lem1} and  Lemma \ref{lem2}, $R^{0}_{a+1}(D)\leq \frac{1}{2}R^{0}_{a+1}(G)$ with equality if and only if $D$ is a sink-source orientation of $G$.\\
     \indent We can obtain  the following equality by Lemma \ref{lem6}, $$\max \{R^{0}_{a+1}(D)|\\D\in \mathcal{O}(G), G\in \mathcal{G}(n,0)\}=\max \{\frac{1}{2}R^{0}_{a+1}(G)| G\in \mathcal{G}(n,0)\}=\frac{1}{2}R^{0}_{a+1}( G^{0}(n,0))$$
    Consequently,  $$R^{0}_{a+1}(D)\leq \frac{1}{2}[n-1+(n-1)^{1+a}]$$
     with equality if and only if $D\in \mathcal{G}^{*}(n,0)$.
\end{proof}

For a unicyclic graph of order $4$ with a triangle, we have

\begin{lemma}\label{lem10}
Let $G_1$ be the graph  depicted in Figure \ref{fig-2}, $D\in \mathcal{O}(G_1)$, $a\geq 1$. Then $$R^{0}_{a+1}(D)\leq \frac{1}{2}[3^{a+1}+2^{a+1}+3]$$
 with equality if and only if  $D\in \{G^{(1)}(4,1),G^{(2)}(4,1)\}$.
\end{lemma}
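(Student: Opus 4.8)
The plan is to rewrite the index in a purely vertex-based form and then run a short finite case analysis driven by the orientation of the triangle. Summing the arc contributions $\frac12\sum_{uv\in A}[(d^+_u)^a+(d^-_v)^a]$ by tail and by head, each out-arc of a vertex $v$ contributes $(d^+_D(v))^a$ and there are $d^+_D(v)$ of them, and dually for in-arcs, so
$$R^{0}_{a+1}(D)=\frac12\sum_{v\in V(G_1)}\bigl[(d^{+}_{D}(v))^{a+1}+(d^{-}_{D}(v))^{a+1}\bigr].$$
Write $G_1$ as a triangle on $u,w_1,w_2$ with a pendant vertex $p$ adjacent to $u$, so that $d_{G_1}(u)=3$, $d_{G_1}(w_1)=d_{G_1}(w_2)=2$ and $d_{G_1}(p)=1$. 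Since $d^{+}_{D}(v)+d^{-}_{D}(v)=d_{G_1}(v)$ and $x\mapsto x^{a+1}$ is convex for $a\ge 1$, the contribution $(d^{+}_{D}(v))^{a+1}+(d^{-}_{D}(v))^{a+1}$ of each $v$ is at most $(d_{G_1}(v))^{a+1}$, with equality exactly when $v$ is a source or a sink; in particular $p$ always contributes $1$.

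Next I would use that the triangle is an odd cycle and hence admits no sink-source orientation (so Lemma \ref{lem1} does not apply), which forces at least one of $u,w_1,w_2$ to be neither source nor sink; thus the crude bound $3^{a+1}+2\cdot 2^{a+1}+1$ cannot be attained. I would then split the orientations according to the triangle: either it is cyclically oriented, in which case every triangle vertex has in- and out-degree $1$ along the cycle, or it is transitively oriented, in which case it has a unique source, a unique sink and one middle vertex. Reading off the degree splits, and, when $u$ is a triangle-source or triangle-sink, orienting the pendant arc $up$ so as to keep $u$ pure, gives for the full vertex sum $S(D):=\sum_{v}[(d^{+}_{D}(v))^{a+1}+(d^{-}_{D}(v))^{a+1}]=2R^{0}_{a+1}(D)$ the three candidate values $2^{a+1}+6$ (cyclic triangle), $3\cdot 2^{a+1}+2$ ($u$ the triangle middle), and $3^{a+1}+2^{a+1}+3$ ($u$ a triangle source/sink with the pendant aligned); halving the last reproduces the target $\frac12[3^{a+1}+2^{a+1}+3]$.

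It then remains to check that the last value dominates the other two and all sub-optimal orientations. This reduces to the elementary inequalities $3^{a+1}-3>0$ and $3^{a+1}-2^{a+2}+1>0$ for $a\ge 1$, the latter being exactly the estimate already established for $f(3)$ in the proof of Lemma \ref{lem4}; one also verifies that misorienting the pendant when $u$ is a triangle source/sink costs $3^{a+1}-2^{a+1}-1>0$, hence is strictly worse. I expect the only real care to lie in the bookkeeping: confirming that each of the finitely many orientations falls into one of the listed cases, and that the strict inequalities hold for every $a\ge 1$ (they are tight at $a=0$, which is excluded). Finally, tracing the equality conditions, the maximum is attained precisely when $u$ is a pure source with $u\to p$ and the triangle is transitive with $u$ as its source, or its arc-reversal with $u$ a pure sink; these two digraphs are exactly $G^{(1)}(4,1)$ and $G^{(2)}(4,1)$ (the choice of which of $w_1,w_2$ is the middle vertex yielding isomorphic orientations), giving the stated characterization.
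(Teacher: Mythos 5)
Your proof is correct, and it takes a genuinely different route from the paper's. The paper argues by brute force: it lists orientations of $G_1$ in Figure \ref{fig-2}, computes $R^{0}_{a+1}$ for each directly from the arc-sum definition, and compares the values pairwise, using the same two inequalities $3^{a+1}-2^{a+1}-1>0$ and $3^{a+1}-2^{a+2}+1>0$ that you invoke. You instead pass first to the vertex form $R^{0}_{a+1}(D)=\frac{1}{2}\sum_{v}[(d^{+}_{D}(v))^{a+1}+(d^{-}_{D}(v))^{a+1}]$ (a correct identity, obtained by grouping the arc sum by tails and by heads), bound each vertex contribution by $(d_{G_1}(v))^{a+1}$ via superadditivity of $x^{a+1}$, and then organize all $2^4=16$ orientations into four structural classes: cyclic triangle, transitive triangle with $u$ in the middle, transitive triangle with $u$ a source/sink and the pendant arc aligned, and the same with the pendant misaligned. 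Your class values $2^{a+1}+6$, $3\cdot 2^{a+1}+2$, $3^{a+1}+2^{a+1}+3$ and $2^{a+2}+4$ are all correct, your three comparison inequalities hold strictly for $a\geq 1$ (and, as you note, degenerate exactly at $a=0$), and the equality case — transitive triangle with $u$ a pure source and $u\to p$, or its arc reversal — drops out structurally and matches $\{G^{(1)}(4,1),G^{(2)}(4,1)\}$ up to the isomorphism swapping $w_1,w_2$. Your approach buys two things: exhaustiveness is transparent (every orientation demonstrably lands in one of the four classes), whereas the paper's enumeration rests on a figure; and indeed the paper lists only six orientations realizing the three values $\frac{1}{2}(3^{a+1}+2^{a+1}+3)$, $\frac{1}{2}(2^{a+2}+4)$ and $\frac{1}{2}(2^{a+2}+2^{a+1}+2)$, so the cyclic-triangle value $\frac{1}{2}(2^{a+1}+6)$ never appears in its check — those orientations are suboptimal, so the lemma is unaffected, but your treatment covers them explicitly and is the more complete one. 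What the paper's route buys in exchange is only brevity: given the figure, no identity or convexity argument is needed.
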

\begin{proof}
It is straightforward to check that $\mathcal{O}(G_1)=\{D_1,D_2,\cdot\cdot\cdot,D_{6}\}$, where $D_1,D_2,\cdot\cdot\cdot,D_{6}$ are depicted in Figure \ref{fig-2}.  By  direct computing, we have
$$R^{0}_{a+1}(D_5)=R^{0}_{a+1}(D_1)=\frac{1}{2}(3^{a+1}+2^{a+1}+3)$$
$$R^{0}_{a+1}(D_4)=R^{0}_{a+1}(D_2)=\frac{1}{2}(4+2^{a+2})$$
$$R^{0}_{a+1}(D_6)=R^{0}_{a+1}(D_3)=\frac{1}{2}(2+2^{a+2}+2^{a+1}).$$
As the fact that
$$R^{0}_{a+1}(D_1)-R^{0}_{a+1}(D_2)=\frac{1}{2}(3^{a+1}+2^{a+1}+3)-\frac{1}{2}(4+2^{a+2})=\frac{1}{2}(3^{a+1}-2^{a+1}-1)>0,$$

$R^{0}_{a+1}(D_1)-R^{0}_{a+1}(D_3)=\frac{1}{2}(3^{a+1}+2^{a+1}+3)-\frac{1}{2}(2+2^{a+2}+2^{a+1})=\frac{1}{2}(3^{a+1}-2^{a+2}+1)=\frac{1}{2}(9\times 3^{a-1}-8\times 2^{a-1}+1)>0,$
$$R^{0}_{a+1}(D)\leq \frac{1}{2}[3^{a+1}+2^{a+1}+3]$$
 with equality if and only if  $D\in \{D_1, D_5\}= \{G^{(1)}(4,1),G^{(2)}(4,1)\}$.
\end{proof}
\begin{figure}[ht]
\begin{center}
  \includegraphics[width=10cm,height=3cm]{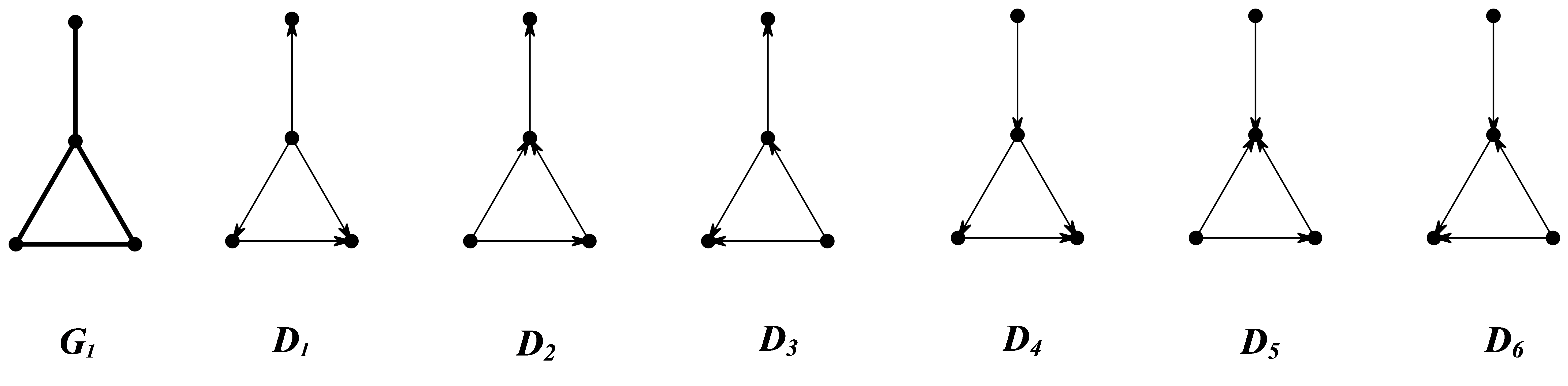}
 \end{center}
\vskip -0.5cm
\caption{$G_1$ and its orientations in Lemma \ref{lem10} }\label{fig-2}
\end{figure}



\begin{lemma}\label{lem102}
Let $G$ be a simple connected graph, $d_{G}(u)=1$ and $u\in V(G)$, $G'=G-u$, $D\in \mathcal{O}(G)$,  $D'\in \mathcal{O}(G')$ such that  $A(D')\bigcap A(D)=A(D')$, $v \in N_{G}(u)$, $a\geq 1$. Then
$$R^{0}_{a+1}(D)-R^{0}_{a+1}(D')\leq \frac{1}{2}[1+(d_{G}(v))^{a+1}-(d_{G}(v)-1)^{a+1}]$$
  with equality if and only if   $\max\{d^{+}_{D}(v), d^{-}_{D}(v)\}=d_{G}(v)$.
\end{lemma}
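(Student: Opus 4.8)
The plan is to exploit a vertex-based reformulation of the index that turns the arc sum into a sum over vertices, which makes the effect of deleting a pendent vertex transparent. Starting from the definition $R^{0}_{a+1}(D)=\frac{1}{2}\sum_{xy\in A(D)}[(d^{+}_{D}(x))^{a}+(d^{-}_{D}(y))^{a}]$, I would observe that the term $(d^{+}_{D}(x))^{a}$ is counted exactly once for each out-arc leaving $x$, hence $d^{+}_{D}(x)$ times, and likewise $(d^{-}_{D}(x))^{a}$ is counted $d^{-}_{D}(x)$ times. This yields the identity $R^{0}_{a+1}(D)=\frac{1}{2}\sum_{x\in V(D)}[(d^{+}_{D}(x))^{a+1}+(d^{-}_{D}(x))^{a+1}]$, and the same for $D'$.

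Since $A(D')=A(D')\cap A(D)$, the digraph $D'$ is precisely the restriction of $D$ obtained by deleting $u$ together with the single arc joining $u$ and $v$. Consequently every vertex other than $u$ and $v$ has identical in- and out-degrees in $D$ and $D'$, so in the difference $R^{0}_{a+1}(D)-R^{0}_{a+1}(D')$ only the contributions of $u$ and $v$ survive. Writing $p=d^{+}_{D}(v)$, $q=d^{-}_{D}(v)$ and noting $p+q=d_{G}(v)=:d$, I would split into two cases according to the orientation of the pendent edge. If $vu\in A(D)$ then $u$ contributes $1^{a+1}+0^{a+1}=1$, the out-degree of $v$ drops by one in $D'$ while its in-degree is unchanged, and the difference equals $\frac{1}{2}[1+p^{a+1}-(p-1)^{a+1}]$ with $p\geq 1$; symmetrically, if $uv\in A(D)$ the difference equals $\frac{1}{2}[1+q^{a+1}-(q-1)^{a+1}]$ with $q\geq 1$.

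To finish I would introduce $\psi(t)=t^{a+1}-(t-1)^{a+1}$ on $t\geq 1$ and show it is strictly increasing, since $\psi'(t)=(a+1)[t^{a}-(t-1)^{a}]>0$ for $a\geq 1$. Because $p\leq d$ and $q\leq d$, each case gives $R^{0}_{a+1}(D)-R^{0}_{a+1}(D')\leq \frac{1}{2}[1+\psi(d)]=\frac{1}{2}[1+d^{a+1}-(d-1)^{a+1}]$, as required. Tracking equality through the monotonicity, equality in the first case forces $p=d$ (so $q=0$) and in the second forces $q=d$ (so $p=0$); in either situation $\max\{d^{+}_{D}(v),d^{-}_{D}(v)\}=d=d_{G}(v)$, and conversely this condition realizes equality, matching the stated criterion.

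The argument is short and I expect no serious obstacle; the one point requiring care is the bookkeeping that guarantees only $u$ and $v$ change degree, which rests on the hypothesis $A(D')\cap A(D)=A(D')$, together with the clean merging of the two orientation cases into the single condition $\max\{d^{+}_{D}(v),d^{-}_{D}(v)\}=d_{G}(v)$.
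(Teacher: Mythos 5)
Your proposal is correct. It shares the paper's overall skeleton --- split into the two cases $uv\in A(D)$ and $vu\in A(D)$, reduce the difference to $\frac{1}{2}[1+t^{a+1}-(t-1)^{a+1}]$ where $t$ is the relevant degree of $v$, then conclude by monotonicity --- but you reach the reduction by a different decomposition of the index. The paper works directly with the arc-sum definition and compares, arc by arc, the contributions of the edges incident to $v$ in $D$ and $D'$; this requires indexed bookkeeping over the in- and out-neighbors of $v$ and produces $\frac{1}{2}[(d^{+}_{D}(u))^{a}+q^{a}+(q-1)(q^{a}-(q-1)^{a})]$ before simplification. You instead first convert the arc sum into the vertex sum $R^{0}_{a+1}(D)=\frac{1}{2}\sum_{x\in V(D)}[(d^{+}_{D}(x))^{a+1}+(d^{-}_{D}(x))^{a+1}]$ (the digraph analogue of the undirected identity $\sum_{u}d(u)^{a}=\sum_{uv\in E}[d(u)^{a-1}+d(v)^{a-1}]$ quoted in the paper's introduction), after which deleting $u$ visibly perturbs only the terms of $u$ and $v$ and the difference is immediate. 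Your route buys shorter, less error-prone bookkeeping, and it makes explicit two points the paper leaves implicit: the strict monotonicity of $\psi(t)=t^{a+1}-(t-1)^{a+1}$ (which the paper's displayed inequality also needs but asserts without proof), and the observation that $p\geq 1$ (resp.\ $q\geq 1$) in the corresponding case, which is exactly what allows the two case-wise equality conditions $p=d_{G}(v)$ and $q=d_{G}(v)$ to merge into the single stated criterion $\max\{d^{+}_{D}(v),d^{-}_{D}(v)\}=d_{G}(v)$. The paper's version has the mild advantage of staying within the arc-sum formalism it uses in all its other lemmas, where the deleted-vertex situations are more complicated and a purely vertex-based shortcut is less clean.
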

\begin{proof}
Let  $N_{G}(v)=\{u=u_1, u_2, \cdot\cdot\cdot,u_{d_{G}(v)}\}$ and $d_{D}^{+}(v)=p$, $d_{D}^{-}(v)=q$, $d_{D'}^{+}(v)=p'$, $d_{D'}^{-}(v)=q'$.

\textbf{Case 1}. $uv\in A(D)$

In this case, $q=q'+1$, $p=p'$. It is straightforward to check that  $d_{D}^{+}(u_i)=d_{D'}^{+}(u_i)$,  $d_{D}^{-}(u_i)=d_{D'}^{-}(u_i) $, $i=2,3,\cdot\cdot\cdot, d_{G}(v)$.  Moreover,
\begin{equation*}
\begin{aligned}
R^{0}_{a+1}(D)-R^{0}_{a+1}(D')=& \frac{1}{2}[R^{0}_{a+1}(d^{+}_{D}(u), q)
+\sum^{q}_{i=2}[R^{0}_{a+1}(q, d^{+}_{D}(u_i))-R^{0}_{a+1}(q'\\
&,d^{+}_{D'}(u_i))]+\sum^{d_{G}(v)}_{i=q+1}[R^{0}_{a+1}(p,d^{-}_{D}(u_i))-R^{0}_{a+1}(p',d^{-}_{D'}(u_i))]]\\
=& \frac{1}{2}[(d^{+}_{D}(u))^{a}+(q)^{a}
+(q-1)[(q)^{a}-(q-1)^{a}]]\\
\leq& \frac{1}{2}[1+(d_{G}(v))^{a}+(d_{G}(v)-1)[(d_{G}(v))^{a}-(d_{G}(v)-1)^{a}]]\\
=&\frac{1}{2}[1+(d_{G}(v))^{a+1}-(d_{G}(v)-1)^{a+1}]
\end{aligned}
\end{equation*}
with equality if and only if   $q=d_{G}(v)$.

\textbf{Case 2}. $vu\in A(D)$

  Similarly to  \textbf{Case 1}, we have
$$R^{0}_{a+1}(D)-R^{0}_{a+1}(D')\leq \frac{1}{2}[1+(d_{G}(v))^{a+1}-(d_{G}(v)-1)^{a+1}]$$
with equality if and only if   $p=d_{G}(v)$.

Hence, $$R^{0}_{a+1}(D)-R^{0}_{a+1}(D')\leq \frac{1}{2}[1+(d_{G}(v))^{a+1}-(d_{G}(v)-1)^{a+1}]$$
  with equality if and only if  $\max \{d_{D}^{+}(v), d_{D}^{-}(v)\}=d_{G}(v)$.
\end{proof}

Next, we will get the oriented unicyclic graphs  with the maximum of  the zeroth-order general Randi\'{c} index and the corresponding extreme graph, which will help to prove the main conclusions.
\begin{lemma}\label{lem8}
Let $G\in \mathcal{G}(n,1)$, $n\geq 3$, $D\in \mathcal{O}(G)$, $a\geq 1$. Then
$$R^{0}_{a+1}(D)\leq \frac{1}{2}[(n-1)^{a+1}+n-1+2^{a+1}]$$
  with equality if and only if $D\in  \mathcal{G}^{*}(n,1)$.
\end{lemma}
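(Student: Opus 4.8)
The plan is to argue by induction on $n$, and the conceptual point driving the whole argument is that the extremal \emph{undirected} unicyclic graph is non-bipartite, so the oriented maximum is strictly below half of the undirected maximum and cannot be obtained from Lemma~\ref{lem2} alone. Write $T(n):=\frac{1}{2}[(n-1)^{a+1}+n-1+2^{a+1}]$ for the target value. The relevant undirected graph is the ``star-plus-triangle'' $G^{0}(n,1)$, a hub $v$ of degree $n-1$ adjacent to everything, with one extra edge closing a triangle. Because that triangle is an odd cycle, every orientation leaves at least one of its vertices mixed, so the best orientation makes $v$ pure (a source or a sink) and exactly one triangle vertex pure, the third contributing only $2$ instead of $2^{a+1}$; this best orientation attains precisely $T(n)$ and defines $G^{(1)}(n,1)$ and $G^{(2)}(n,1)$. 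I would first record this computation, making explicit that $T(n)<\tfrac12 R^{0}_{a+1}(G^{0}(n,1))$.

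For the base cases $n=3$ and $n=4$ I would compute directly. When $n=3$ the only graph is $C_{3}$, whose best orientation (one source, one sink, one mixed vertex) gives exactly $T(3)=2^{a+1}+1$, attained precisely by $G^{(1)}(3,1),G^{(2)}(3,1)$. When $n=4$ the class splits into the paw $G_{1}$, handled verbatim by Lemma~\ref{lem10}, and the cycle $C_{4}$; since $C_{4}$ is bipartite, its sink-source orientations (Lemma~\ref{lem1}) attain $\frac12 R^{0}_{a+1}(C_{4})=2^{a+2}$, and $2^{a+2}\le T(4)$ is strict for $a>1$ but an equality at $a=1$. This is exactly the source of the exceptional enlargement of $\mathcal{G}^{*}(4,1)$ by $G^{(3)}(4,1),G^{(4)}(4,1)$, and I would flag it explicitly.

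For the inductive step $n\ge 5$ I would split on $\delta(G)$. If $\delta(G)\ge 2$, then from $\sum_{v}d_{G}(v)=2n$ with all degrees at least $2$ every vertex has degree $2$, so $G=C_{n}$; here Lemma~\ref{lem2} gives $R^{0}_{a+1}(D)\le \frac12 R^{0}_{a+1}(C_{n})=n2^{a}$, and the elementary estimate $2^{a+1}-1<(n-1)^{a}$, valid for $n\ge5$ since $(n-1)^{a}\ge 4^{a}=2^{2a}\ge 2^{a+1}$, rearranges to the strict inequality $n2^{a}<T(n)$. Thus cycles are never extremal once $n\ge5$. If instead $\delta(G)=1$, pick a pendant $u$ with neighbour $v$ and set $G'=G-u\in\mathcal{G}(n-1,1)$ with induced orientation $D'$. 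Combining Lemma~\ref{lem102}, the induction hypothesis $R^{0}_{a+1}(D')\le T(n-1)$, the monotonicity of $d\mapsto d^{a+1}-(d-1)^{a+1}$, and $d_{G}(v)\le n-1$, the bound telescopes cleanly:
\begin{equation*}
R^{0}_{a+1}(D)\le \tfrac12\big[1+(d_{G}(v))^{a+1}-(d_{G}(v)-1)^{a+1}\big]+T(n-1)\le \tfrac12\big[1+(n-1)^{a+1}-(n-2)^{a+1}\big]+T(n-1)=T(n).
\end{equation*}

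The delicate part, and the step I expect to be the main obstacle, is the equality discussion. Equality throughout the chain forces simultaneously $d_{G}(v)=n-1$, equality in Lemma~\ref{lem102} (so $v$ is pure in $D$), and $R^{0}_{a+1}(D')=T(n-1)$. The first condition makes $v$ adjacent to every other vertex, so the single extra edge closes a triangle and $G=G^{0}(n,1)$; consequently the deleted $u$ is a pendant leaf at the hub and $G'=G^{0}(n-1,1)$ is the paw-type graph, \emph{not} $C_{4}$, which is precisely what prevents the exceptional $C_{4}$-orientations $G^{(3)},G^{(4)}$ from leaking into the equality set at $n=5$. Hence $D'\in\{G^{(1)}(n-1,1),G^{(2)}(n-1,1)\}$, and re-attaching the pendant with the arc that keeps $v$ pure rebuilds exactly $G^{(1)}(n,1)$ or $G^{(2)}(n,1)$, so $D\in\mathcal{G}^{*}(n,1)$. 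The real care lies in tracking these local orientations, checking that the pure and the mixed triangle vertices are consistently inherited from $D'$ and that no alternative pendant choice or orientation yields equality, rather than in any further inequality.
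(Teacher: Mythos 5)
Your proposal is correct and follows essentially the same route as the paper: induction on $n$ with base cases $n=3$ (direct check on $C_3$) and $n=4$ (the paw via Lemma~\ref{lem10}, $C_4$ via Lemmas~\ref{lem1}--\ref{lem2}, yielding the exceptional orientations at $a=1$), and an inductive step split into the cycle case (Lemma~\ref{lem2} plus the estimate $n2^a<\frac{1}{2}[(n-1)^{a+1}+n-1+2^{a+1}]$) and the pendant-deletion case (Lemma~\ref{lem102} plus the induction hypothesis). Your two small refinements -- the purely algebraic cycle estimate replacing the paper's calculus argument, and the explicit verification that $d_G(v)=n-1$ forces $G'=G^{0}(n-1,1)\neq C_4$ so the exceptional orientations cannot enter the equality set at $n=5$ -- are improvements in rigor rather than a different method.
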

\begin{proof}
We proceed by applying induction on $n$.\\
\indent If $n=3$, then  $G=C_3$,  $D\in \mathcal{O}(C_3)$, it is straightforward to check that $R^{0}_{a+1}(D)\leq \frac{1}{2}[2^{a+2}+2]$ with equality if and only if $D\in \{G^{(1)}(3,1),G^{(2)}(3,1)\}=\mathcal{G}^{*}(3,1)$.

If $n=4$, then  $G=\{G_1, C_4\} $, where $G_1$ is shown in  Figure \ref{fig-2}. Let $f(x)=\frac{1}{2}[2^{x+1}+3+3^{x+1}]-2^{x+2}$. When $a\geq2$, $f(a)=\frac{1}{2}[2^{a+1}+3+3^{a+1}]-2^{a+2}=\frac{1}{2}[3+3^{a+1}-3\times 2^{a+1}]=\frac{1}{2}[3+27\times 3^{a-2}-24\times 2^{a-2}]> 0$;
When  $1\leq a\leq 2$, $f'(a)=\frac{1}{2}[3^{a+1}\times ln3-3\times 2^{a+1}\times ln2]>0$ by MATLAB (see Figure \ref{fig-12} of the Appendix A). $f(a)\geq f(1)=0$.
   From Lemma \ref{lem2}, if $D\in \mathcal{O}(C_4)$, we can get  $R^{0}_{a+1}(D)\leq \frac{1}{2}R^{0}_{a+1}(C_4)=2^{a+2}\leq\frac{1}{2}[2^{a+1}+3+3^{a+1}]$ with equality if and only if $a=1$ and $D$ is a sink-source orientation of $C_4$, which implies that $D\in \{G^{(3)}(4,1), G^{(4)}(4,1)\}$. By Lemma \ref{lem10}, if $D\in \mathcal{O}(G_1)$, $M_1(D)\leq \frac{1}{2}[2^{a+1}+3+3^{a+1}]$  with equality if and only if $D\in \{G^{(1)}(4,1),G^{(2)}(4,1)\}$.
Thus the result holds for $n=3,4$.

 If $n \geq 5$ and suppose that the lemma is true completely for all orientations of graphs in $\mathcal{G}(n-1,1)$.
  Let $G\in \mathcal{G}(n,1)$.

 \textbf{Case 1}.  $G=C_n$.

  If $G=C_n$,  $D\in \mathcal{O}(C_n)$,
 let $f(n)=\frac{1}{2}[2^{a+1}+n-1+(n-1)^{a+1}]-n\times 2^{a},$
 then  $f'(n)=\frac{1}{2}[1+(a+1)(n-1)^{a}]-2^{a}>0$, and
    $f(n)\geq f(5)=\frac{1}{2}[2^{a+1}+4+4^{a+1}]-5\times 2^{a}>0.$

    From Lemma \ref{lem2},  we can get  that $R^{0}_{a+1}(D)\leq \frac{R^{0}_{a+1}(C_n)}{2}=n\times 2^{a}< \frac{1}{2}[(n-1)^{a+1}+n-1+2^{a+1}]$. The lemma holds.

\textbf{ Case 2}.  $G\neq C_n$.

 Let $G'=G-\{u\}$, where $d_{G}(u)=1$, then $G'\in \mathcal{G}(n-1,1)$.  Let $v\in V(G)$, $N_{G}(v)=\{u=u_1,u_2,\cdot\cdot\cdot,u_{s}\}$, $1\leq s=d_{G}(v)\leq n-1$,   $D'\in\mathcal{O}(G')$ such that  $A(D')\cap A(D)=A(D')$.

By the induction hypothesis,  $$R^{0}_{a+1}(D')\leq  \frac{1}{2}[(n-2)^{a+1}+n-2+2^{a+1}].$$


From Lemma \ref{lem102}, we can obtain  that
\begin{equation*}
\begin{aligned}
R^{0}_{a+1}(D) & \leqslant R^{0}_{a+1}(D')+\frac{1}{2}[1-(d_{G}(v)-1)^{a+1}+(d_{G}(v))^{a+1}]\\
& \leqslant \frac{1}{2}[2^{a+1}+n-2+(n-2)^{a+1}]+\frac{1}{2}[1+(n-1)^{a+1}-(n-2)^{a+1}]\\
& = \frac{1}{2}[(n-1)^{a+1}+n-1+2^{a+1}]
\end{aligned}
\end{equation*}
 with equality if and only if $R^{0}_{a+1}(D')= \frac{1}{2}[n-2+(n-2)^{a+1}+2^{a+1}]$,  and $\{d_{D}^{+}(u)=d_{G}(u),d_{D}^{-}(v)=d_{G}(v)=n-1\}$ or $\{d_{D}^{+}(v)=d_{G}(v)=n-1,d_{D}^{-}(u)=d_{G}(u)\}$, which implies that $D\in \mathcal{G}^{*}(n,1)$.

 The proof of the lemma is completed.
\end{proof}

\begin{lemma}\label{lem9}
Let $D\in \mathcal{O}(G_2)$, $G_2\in \mathcal{G}(5,2)$, where $G_2$ is depicted in  Figure \ref{fig-3}, $a\geq 1$. Then $$R^{0}_{a+1}(D)\leq 2\times 4^a+2^{a+1}+2$$
 with equality if and only if  $D\in \mathcal{G}^{*}(5,2)$.
\end{lemma}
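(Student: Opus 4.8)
The plan is to exploit the very special structure of $G_2$: as the unique cactus in $\mathcal{G}(5,2)$, it is the ``bowtie'' obtained by gluing two triangles at a common vertex $c$, so that $d_{G_2}(c)=4$ while the four remaining vertices $x_1,y_1,x_2,y_2$ (with triangles $\{c,x_1,y_1\}$ and $\{c,x_2,y_2\}$) all have degree $2$. The starting point is to rewrite the index as a sum over vertices instead of arcs: since each vertex $w$ is the tail of $d^{+}_{D}(w)$ arcs and the head of $d^{-}_{D}(w)$ arcs,
$$R^{0}_{a+1}(D)=\tfrac12\sum_{w\in V(G_2)}\big[(d^{+}_{D}(w))^{a+1}+(d^{-}_{D}(w))^{a+1}\big].$$
Because $t\mapsto t^{a+1}$ is convex for $a\geq 1$ and $d^{+}_{D}(w)+d^{-}_{D}(w)=d_{G_2}(w)$, the contribution of each vertex satisfies $(d^{+}_{D}(w))^{a+1}+(d^{-}_{D}(w))^{a+1}\leq (d_{G_2}(w))^{a+1}$, with equality exactly when $w$ is a source or a sink. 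The whole proof then amounts to locating the unavoidable ``deficit'' below the trivial bound $\tfrac12 R^{0}_{a+1}(G_2)$ forced by the two odd cycles.

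Next I would split on the status of the central vertex $c$. In the first case $c$ is pure (a source or a sink), so it contributes the full $4^{a+1}$. Since the two triangle edges incident to $c$ then point the same way at $c$, orienting the remaining edge $x_iy_i$ makes exactly one of $x_i,y_i$ pure (contributing $2^{a+1}$) and the other mixed with $d^{+}=d^{-}=1$ (contributing $2$), independently in each triangle. Summing gives $R^{0}_{a+1}(D)=\tfrac12[4^{a+1}+2\cdot 2^{a+1}+2\cdot 2]=2\times4^{a}+2^{a+1}+2$ for \emph{every} such orientation, so equality always holds here; up to isomorphism these are precisely the two digraphs $G^{(1)}(5,2),G^{(2)}(5,2)$ of $\mathcal{G}^{*}(5,2)$, distinguished by whether $c$ is a source or a sink.

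In the second case $c$ is mixed, so $(d^{+}_{D}(c),d^{-}_{D}(c))\in\{(1,3),(2,2),(3,1)\}$ and by convexity its contribution is at most $1+3^{a+1}$. Bounding each of the four degree-$2$ vertices by its pure value $2^{a+1}$ yields $R^{0}_{a+1}(D)\leq \tfrac12[1+3^{a+1}+4\cdot 2^{a+1}]$, and it remains to check this is strictly below the target, i.e. $3^{a+1}+2^{a+2}<4^{a+1}+3$ for all $a\geq 1$. I would close this with a monotonicity argument: $\psi(a)=4^{a+1}+3-3^{a+1}-2^{a+2}$ satisfies $\psi(1)=2>0$ and $\psi'(a)>0$ for $a\geq 1$, hence $\psi(a)>0$ throughout. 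The main obstacle is bookkeeping rather than conceptual: one must verify the triangle-by-triangle ``one pure, one mixed'' claim carefully, since that is exactly what pins down the precise extremal value, and then confirm the closing scalar inequality, which is the step that secures the strictness in the mixed case and therefore the equality characterization $D\in\mathcal{G}^{*}(5,2)$.
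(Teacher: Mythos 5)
Your proof is correct, and it takes a genuinely different route from the paper's. The paper argues by exhaustive enumeration: it lists the orientations of $G_2$ up to isomorphism ($D_1,\dots,D_{10}$ of Figure \ref{fig-3}), computes $R^{0}_{a+1}(D_i)$ for each one directly, and compares the resulting values pairwise to single out $D_8$ and $D_{10}$. You instead pass to the vertex form of the index,
\begin{equation*}
R^{0}_{a+1}(D)=\frac{1}{2}\sum_{w\in V(G_2)}\left[(d^{+}_{D}(w))^{a+1}+(d^{-}_{D}(w))^{a+1}\right],
\end{equation*}
which is justified by double counting (each $w$ is the tail of exactly $d^{+}_{D}(w)$ arcs and the head of exactly $d^{-}_{D}(w)$ arcs), bound each vertex's contribution by superadditivity of $t\mapsto t^{a+1}$ for $a\geq 1$, and split on whether the center $c$ is pure or mixed. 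Both halves check out: in the pure case every orientation gives exactly $\frac{1}{2}[4^{a+1}+2\cdot 2^{a+1}+2\cdot 2]=2\times 4^{a}+2^{a+1}+2$, agreeing with the paper's computed values for $D_8$ and $D_{10}$; in the mixed case the contribution of $c$ is at most $1+3^{a+1}$ by convexity, and your closing inequality $3^{a+1}+2^{a+2}<4^{a+1}+3$ does hold for all $a\geq 1$ (the difference equals $2$ at $a=1$ and is increasing, since $4^{a+1}\ln 4>3^{a+1}\ln 3+2^{a+2}\ln 2$ there), so mixed-center orientations are strictly suboptimal. What your approach buys: a self-contained argument that does not depend on reading ten digraphs off a figure, a structural explanation of why the extremal orientations are precisely those making $c$ a source or a sink, and a proof that generalizes verbatim to a bundle of $r$ triangles glued at one vertex; what the paper's enumeration buys is the exact value of the index for every orientation class, which is more information than the lemma needs. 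One point you should make explicit in a final write-up: equality holds for all eight labeled orientations with pure center, and these fall into exactly two isomorphism classes, namely $G^{(1)}(5,2)$ and $G^{(2)}(5,2)$, so the equality condition ``$D\in\mathcal{G}^{*}(5,2)$'' must be read up to digraph isomorphism --- which is exactly the convention the paper itself adopts when it writes $\mathcal{O}(G_2)=\{D_1,\dots,D_{10}\}$ even though $G_2$ has $2^{6}=64$ labeled orientations.
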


\begin{figure}[ht]
\begin{center}
  \includegraphics[width=11cm,height=8cm]{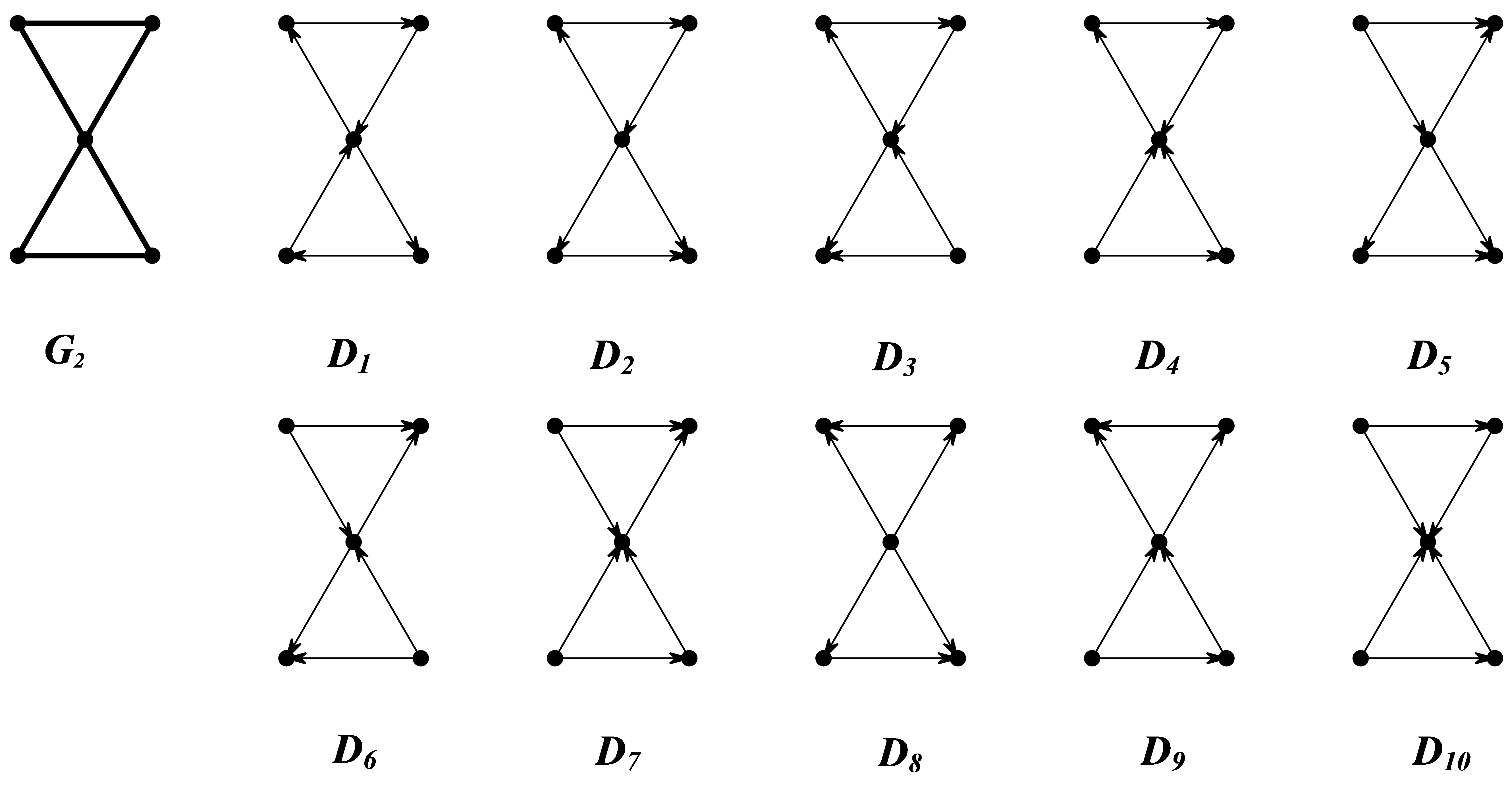}
 \end{center}
\vskip -0.5cm
\caption{$G_2$ and its orientations  }\label{fig-3}
\end{figure}

\begin{proof}
 It is straightforward to check that $\mathcal{O}(G_2)=\{D_1,D_2,\cdot\cdot\cdot, D_{10}\}$, where $D_1,D_2,\cdot\cdot\cdot, D_{10}$ are depicted in Figure \ref{fig-3}.
By   direct computing, we obtain

$R^{0}_{a+1}(D_1)=\frac{1}{2}(8+2^{a+2})$

$R^{0}_{a+1}(D_4)=R^{0}_{a+1}(D_2)=\frac{1}{2}(7+3^{a+1}+2^{a+1})$

$R^{0}_{a+1}(D_9)=R^{0}_{a+1}(D_3)=\frac{1}{2}(4+2^{a+3})$

$R^{0}_{a+1}(D_7)=R^{0}_{a+1}(D_5)=\frac{1}{2}(3+6\times 2^{a}+3^{a+1})$

$R^{0}_{a+1}(D_6)=\frac{1}{2}(12\times 2^{a})$

$R^{0}_{a+1}(D_{10})=R^{0}_{a+1}(D_8)=2\times 4^a+2^{a+1}+2$.

As the fact that

$R^{0}_{a+1}(D_8)-R^{0}_{a+1}(D_1)=2\times 4^a+2^{a+1}+2-\frac{1}{2}(8+2^{a+2})=2\times 4^a-2>0$

$R^{0}_{a+1}(D_8)-R^{0}_{a+1}(D_2)=2\times 4^a+2^{a+1}+2-\frac{1}{2}(7+3^{a+1}+2^{a+1})=2\times 4^a-\frac{3}{2}\times 3^a+2^a-1.5>0$

$R^{0}_{a+1}(D_8)-R^{0}_{a+1}(D_3)=2\times 4^a+2^{a+1}+2-\frac{1}{2}(4+2^{a+3})=2\times 4^a-2^{a+1}>0$

$R^{0}_{a+1}(D_8)-R^{0}_{a+1}(D_5)=2\times 4^a+2^{a+1}+2-\frac{1}{2}(3+6\times 2^{a}+3^{a+1})=2\times 4^a-\frac{3}{2}\times 3^{a}-2^a+\frac{1}{2}=8\times 4^{a-1}-\frac{9}{2}\times 3^{a-1}-2\times 2^{a-1}+\frac{1}{2}>0$

$R^{0}_{a+1}(D_8)-R^{0}_{a+1}(D_6)=2\times 4^a+2^{a+1}+2-\frac{1}{2}(12\times 2^{a})=(2^{a+1}+2-6)2^a+2>0$

It is easy to get  that $$R^{0}_{a+1}(D)\leq 2\times 4^a+2^{a+1}+2$$
 with equality if and only if  $D\in \{D_{10},D_8\}=\mathcal{G}^{*}(5,2)$.

\end{proof}

\section{Main Result}
 We will solve the problem which is to find  the maximum    zeroth-order general Randi\'{c} index over the set of all oriented cacti in terms of the order in the section.
The main result is as follows.

\begin{theorem}\label{the11}
Let $G\in \mathcal{G}(n,r)$, $D\in \mathcal{O}(G)$, $a\geq 1$. Then
$$R^{0}_{a+1}(D)\leq \frac{1}{2}[(n-1)^{a+1}+n-1+2r\times 2^{a}]$$
 with equality if and only if $D\in \mathcal{G}^{*}(n,r)$.
\end{theorem}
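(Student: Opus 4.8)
The plan is to argue by induction on the order $n$, with the number of cycles $r$ as a secondary parameter, using the three reduction lemmas (Lemma \ref{lem102}, Lemma \ref{lem4}, Lemma \ref{lem5}) to pass from $G$ to a smaller cactus whose oriented index is controlled by the induction hypothesis. First I would record, via the identity $R^{0}_{a+1}(D)=\frac{1}{2}\sum_{v}[(d^{+}_{v})^{a+1}+(d^{-}_{v})^{a+1}]$, that the claimed bound is exactly the value attained by the orientation of the extremal cactus $G^{0}(n,r)$ in which the central vertex of degree $n-1$ is a source (or sink) and each of the $r$ triangles has its far vertex turned into a sink (resp. source). This identifies the target and explains why the triangles contribute $2r\cdot 2^{a}$ rather than $2r\cdot 2^{a+1}$: an odd cycle admits no sink-source orientation, so Lemma \ref{lem2} is not tight on it. The base cases are $r=0$ (Lemma \ref{lem7}), $r=1$ (Lemma \ref{lem8}), and the unique graph of $\mathcal{G}(5,2)$ (Lemma \ref{lem9}); every remaining pair $(n,r)$ with $r\geq 2$ satisfies $n\geq 6$, so the reduction lemmas are available.

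For the inductive step I would split on $\delta(G)$. If $\delta(G)=1$, pick a pendant vertex $u$ with neighbour $v$, set $G'=G-u\in\mathcal{G}(n-1,r)$, and combine the induction hypothesis with Lemma \ref{lem102}. Since the increment $\frac{1}{2}[1+d_{G}(v)^{a+1}-(d_{G}(v)-1)^{a+1}]$ is increasing in $d_{G}(v)\leq n-1$ and equals the difference between the targets of $(n,r)$ and $(n-1,r)$ precisely when $d_{G}(v)=n-1$, the bound closes. If $\delta(G)\geq 2$ then $r\geq 2$ and, by the block-tree structure of a cactus, there is a leaf block, which must be a cycle; I would take its cut vertex as the high-degree vertex $u_{2}$. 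When this leaf cycle is a triangle, Lemma \ref{lem5} reduces $(n,r)$ to $(n-2,r-1)$, and one checks that the increment $\frac{1}{2}[2^{a+1}+2+d^{a+1}-(d-2)^{a+1}]$, increasing in $d=d_{G}(u_{2})\leq n-1$, matches the corresponding target difference exactly at $d=n-1$. When the leaf cycle has length $\geq 4$, Lemma \ref{lem4} shortens it, reducing $(n,r)$ to $(n-1,r)$; here the key observation is that a cut vertex lying on a cycle of length $k\geq 4$ misses at least $k-3\geq 1$ of that cycle's vertices, so $d_{G}(u_{2})\leq n-2$, and feeding this into the Lemma \ref{lem4} increment yields a strict inequality, confirming that cacti with a cycle longer than a triangle are never extremal.

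The final task is the equality analysis, which I would carry out by propagating the equality conditions of the reduction lemmas backward along the chain. Each application forces simultaneously that the reduced digraph $D'$ is extremal in its class (so $D'\in\mathcal{G}^{*}(n',r')$ by induction) and that the relevant local out- or in-degree is maximal, for instance $d^{+}_{D}(u_{2})=d_{G}(u_{2})$ or $d^{-}_{D}(u_{2})=d_{G}(u_{2})$, together with $d_{G}(v)=n-1$ in the pendant and triangle cases. These conditions force $G\cong G^{0}(n,r)$ and pin down the orientation to the sink-source type of Lemma \ref{lem2} on the acyclic skeleton together with the prescribed orientation of each triangle, i.e.\ $D\in\mathcal{G}^{*}(n,r)$; the exceptional enlargement of $\mathcal{G}^{*}(4,1)$ by the two sink-source orientations of $C_{4}$ when $a=1$ enters only through the base case Lemma \ref{lem8}.

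I expect the main obstacle to be the $\delta(G)\geq 2$ case. One must verify that every cactus of minimum degree at least $2$ really exposes either a leaf triangle (for Lemma \ref{lem5}) or a length-$\geq 4$ leaf cycle whose cut vertex has degree $\geq 3$ (for Lemma \ref{lem4}), and, more delicately, that the increment bound and the induction bound cannot be saturated together except on $G^{0}(n,r)$. The estimate $d_{G}(u_{2})\leq n-2$ for long cycles is precisely what rules out spurious equality cases there, whereas for triangles and pendants the equality conditions force the extremal star-of-triangles shape; making these simultaneity arguments airtight, rather than the individual inequalities, is the crux.
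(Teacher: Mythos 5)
Your overall strategy is essentially identical to the paper's: the same induction with base cases Lemma \ref{lem7}, Lemma \ref{lem8} and Lemma \ref{lem9}, the same split into $\delta(G)=1$ (pendant vertex, Lemma \ref{lem102}) and $\delta(G)\geq 2$ (terminal cycle, with Lemma \ref{lem5} for a leaf triangle and Lemma \ref{lem4} for a leaf cycle of length at least four), and the same matching of increments against target differences in the pendant and triangle cases.

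There is, however, a genuine gap in your long-cycle case. You assert that $d_{G}(u_2)\leq n-2$, fed into the Lemma \ref{lem4} increment, ``yields a strict inequality'' and that this estimate ``is precisely what rules out spurious equality cases there.'' That is false when $a=1$. For $a=1$ the Lemma \ref{lem4} increment is $\frac{1}{2}[2^{2}-1+d^{2}-(d-1)^{2}]=d+1$, while the difference between the targets for $(n,r)$ and $(n-1,r)$ is $\frac{1}{2}[(n-1)^{2}-(n-2)^{2}+1]=n-1$; at $d=n-2$ these coincide, so the entire chain of inequalities can be numerically tight --- for instance $n=6$, $r=2$, a $4$-cycle and a triangle sharing the vertex $u_2$ of degree $4$. (Equivalently, the paper's slack function $f(n)$ in Subcase 2.1 vanishes identically when $a=1$.) The paper closes this case not by a degree count but by an orientation-compatibility contradiction: equality in Lemma \ref{lem4} forces, say, $u_1u_2\in A(D')$ together with $d^{+}_{D}(u_2)=d_{G}(u_2)$, whereas $D'\in \mathcal{G}^{*}(n-1,r)$ with $d=n-2$ forces $u_2$ to be the centre of $G^{0}(n-1,r)$ and hence a full sink in $D'$, i.e. $d^{-}_{D'}(u_2)=d_{G'}(u_2)$; since $D$ and $D'$ share all arcs at $u_2$ except those involving $u_0$ and the added edge $u_1u_2$, these two conditions are incompatible. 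Without this argument (or a substitute for it) your proof establishes the upper bound but not the claimed characterization of equality for $a=1$: you cannot exclude that an oriented cactus containing a cycle of length at least four attains the maximum. A secondary caveat: even for $1<a<3$ the numeric strictness you invoke is not free --- the paper has to verify the monotonicity of the relevant slack ($h'(a)>0$) by a MATLAB computation --- so some analytic substitute would be needed there as well.
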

 \begin{proof} We proceed by applying induction on $n$ and $r$.  By Lemma \ref{lem7} and Lemma \ref{lem8},  the result holds  for $r=0$ and 1. If $r\geq 2$ , then $n\geq 5$. If $n=5$,  $|\mathcal{G}(5,2)|=1$ and $G_{2}\in \mathcal{G}(5,2)$ (see Figure \ref{fig-3}), then the result holds  by Lemma \ref{lem9}.

 If $n\geq 6$ and $r\geq 2$.  We distinguish the following two cases according to $\delta(G)$.

\textbf{Case 1}. $\delta(G)=1$.

Let $u,v\in V(G)$ and $d_{G}(u)=1$, $v\in N_{G}(u)$,
$G'=G-u$. Obviously, $G'\in \mathcal{G}(n-1,r)$, $d_{G}(v)\leq n-1$.

Let  $D'\in\mathcal{O}(G')$ such that  $A(D')\cap A(D)=A(D')$.
By the induction hypothesis,  $$R^{0}_{a+1}(D')\leq \frac{1}{2}[(n-2)^{a+1}+n-2+2r\times 2^{a}].$$


From Lemma \ref{lem102}, we have
\begin{equation*}
\begin{aligned}
R^{0}_{a+1}(D)&\leq R^{0}_{a+1}(D')+\frac{1}{2}[1+(d_{G}(v))^{a+1}-(d_{G}(v)-1)^{a+1}]\\
 &\leqslant  \frac{1}{2}[(n-2)^{a+1}+n-2+2r\times 2^{a}+1+(n-1)^{a+1}-(n-2)^{a+1}]\\
& = \frac{1}{2}[(n-1)^{a+1}+n-1+2r\times 2^{a}]
\end{aligned}
\end{equation*}
  with equality if and only if $R^{0}_{a+1}(D')= \frac{1}{2}[(n-2)^{a+1}+n-2+2r\times 2^{a}]$, $\max \{d_{D}^{+}(v), d_{D}^{-}(v)\}=d_{G}(v)=n-1$, which imply the fact that $D\in \mathcal{G}^{*}(n,r)$. The result holds.

\textbf{Case 2}. $\delta(G)\geq 2$.

 It is obvious that there exists  $u_0u_1\in E(G)$ in a terminal block of $G$ such that $d_{G}(u_0)=d_{G}(u_1)=2$, $N_{G}(u_0)=\{u_1,u_2\}$ and  $d_{G}(u_2)=d\geq 3$. We will take two subcases into consideration.

\textbf{Subcase 2.1}. $u_1u_2\notin E(G)$.

Let $G'=G-u_0+u_1u_2$ and $G''=G-u_0$. Obviously, $G'\in \mathcal{G}(n-1,r)$ and $d\leq n-2$. Let  $D'\in\mathcal{O}(G')$, $D''\in\mathcal{O}(G'')$  such that  $A(D')\cap A(D)=A(D'')$. By the induction hypothesis,  we can obtain the inequality that
$$R^{0}_{a+1}(D')\leq \frac{1}{2}[(n-2)^{a+1}+n-2+2r\times 2^{a}].$$ From  Lemma \ref{lem4},  we  obtain

\begin{equation*}
\begin{aligned}
R^{0}_{a+1}(D) & \leq R^{0}_{a+1}(D')+\frac{1}{2}[2^{a+1}+d^{a+1}-(d-1)^{a+1}-1]\\
& \leq \frac{1}{2}[(n-2)^{a+1}+n-2+2r\times 2^{a}+2^{a+1}+d^{a+1}-(d-1)^{a+1}-1]\\
& \leq \frac{1}{2}[(n-2)^{a+1}+n-2+2r\times 2^{a}+2^{a+1}+(n-2)^{a+1}-(n-3)^{a+1}-1]\\
&=\frac{1}{2}[2(n-2)^{a+1}+n-3+(r+1)\times 2^{a+1}-(n-3)^{a+1}]
\end{aligned}
\end{equation*}

Let $f(n)=\frac{1}{2}[(n-1)^{a+1}+n-1+2r\times 2^{a}]-\frac{1}{2}[2(n-2)^{a+1}+n-3+(r+1)\times 2^{a+1}-(n-3)^{a+1}]$ and $g(x)=x^a$ $(x>0)$, we have $g'(x)=ax^{a-1}$  and $g''(x)=a(a-1)x^{a-2}\geq 0$. Hence
$f'(n)=\frac{1}{2}[(a+1)(n-1)^{a}+1]-\frac{1}{2}[2(a+1)(n-2)^{a}+1-(a+1)(n-3)^{a}]=\frac{1}{2}(a+1)[(n-1)^{a}-2(n-2)^{a}+(n-3)^{a}]\geq 0$. It is obvious that $\min\limits_{n\geq 6}f(n)=f(6)$.

If $a\geq 3$, then
$f(6)=\frac{1}{2}[5^{a+1}+5+2r\times 2^{a}]-\frac{1}{2}[2\times 4^{a+1}+3+(r+1)\times 2^{a+1}-3^{a+1}]=\frac{1}{2}[5^{a+1}+2-2\times 4^{a+1}+3^{a+1}-2^{a+1}]=\frac{1}{2}[625\times 5^{a-3}+2-512\times 4^{a-3}+3^{a+1}-2^{a+1}]>0$.

If $a=1$, then
$f(6)=\frac{1}{2}[5^{1+1}+5+2r\times 2^{1}]-\frac{1}{2}[2\times 4^{1+1}+3+(r+1)\times 2^{1+1}-3^{1+1}]=0$.

 If $1\leq a\leq 3$, let $h(a)=f(6)$, then $h'(a)=\frac{1}{2}[5^{a+1}\times ln5-2\times 4^{a+1}\times ln4+3^{a+1}\times ln3-2^{a+1}\times ln2]>0$ by direct computing of MATLAB (see Figure \ref{fig-11} of the Appendix A).
 Hence, $f(6)=h(a)\geq h(1)= 0$.

Consequently, $$R^{0}_{a+1}(D)\leq \frac{1}{2}[(n-1)^{a+1}+n-1+2r\times 2^{a}]$$ with equality if and only if
$a=1$, $D'\in \mathcal{G}^{*}(n-1,r)$, $d=n-2$, $u_1u_0, u_2u_0\in A(D), u_1u_2\in A(D'), d^{+}_{D}(u_2)=d_{G}(u_2)$; or  $u_0u_1, u_0u_2\in A(D), u_2u_1\in A(D'), d^{-}_{D}(u_2)=d_{G}(u_2)$; or $u_0u_1, u_2u_0\in A(D), u_1u_2\in A(D'), d^{-}_{D}(u_1)=d_{G}(u_1), d^{+}_{D}(u_2)=d_{G}(u_2)$; or $u_1u_0, u_0u_2\in A(D), u_2u_1\in A(D'), d^{+}_{D}(u_1)=d_{G}(u_1), d^{-}_{D}(u_2)=d_{G}(u_2)$.
But if $D'\in \mathcal{G}^{*}(n-1,r)$, $u_1u_2\in A(D')$, we have $d^{-}_{D'}(u_2)=d_{G'}(u_2)$. It is  a contradiction with  $d^{+}_{D}(u_2)=d_{G}(u_2)$.
If $D'\in \mathcal{G}^{*}(n-1,r)$, $u_2u_1\in A(D')$, we have $d^{+}_{D'}(u_2)=d_{G'}(u_2)$. It is  a contradiction with  $d^{-}_{D}(u_2)=d_{G}(u_2)$.
Hence,
 $$R^{0}_{a+1}(D)< \frac{1}{2}[(n-1)^{a+1}+n-1+2r\times 2^{a}].$$

\textbf{Subcase 2.2}. $u_1u_2\in E(G)$.

Let $G'=G-u_0-u_1$. Obviously, $G'\in \mathcal{G}(n-2,r-1)$ and $d\leq n-1$.  Let $D'\in\mathcal{O}(G')$ such that  $A(D')\cap A(D)=A(D')$.  By the induction hypothesis,
$$R^{0}_{a+1}(D')\leq \frac{1}{2}[(n-3)^{a+1}+n-3+2(r-1)\times 2^{a}]$$
 By Lemma \ref{lem5},  we can obtain

\begin{equation*}
\begin{aligned}
R^{0}_{a+1}(D) & \leqslant R^{0}_{a+1}(D')+\frac{1}{2}[2^{a+1}+2+d^{a+1}-(d-2)^{a+1}]\\
& \leq \frac{1}{2}[(n-3)^{a+1}+n-3+2(r-1)\times 2^{a}+2^{a+1}+2+d^{a+1}-(d-2)^{a+1}]\\
& \leq \frac{1}{2}[(n-3)^{a+1}+n-3+2(r-1)\times 2^{a}+2^{a+1}+2+(n-1)^{a+1}-(n-3)^{a+1}]\\
&=\frac{1}{2}[(n-1)^{a+1}+n-1+2r\times 2^{a}]
\end{aligned}
\end{equation*}
with equality if and only if $R^{0}_{a+1}(D')= \frac{1}{2}[(n-3)^{a+1}+n-3+2(r-1)\times 2^{a}]$, $d=n-1$,
or equivalently, $D'\in \mathcal{G}^{*}(n-2,r-1)$ and $\{d_{D}^{-}(u_2)=d_{G}(u_2)=n-1\}$ or $\{d_{D}^{+}(u_2)=d_{G}(u_2)=n-1\}$, i.e., $D\in \mathcal{G}^{*}(n,r)$.   The result holds for $n$.
\end{proof}

\textbf{Acknowledgment}.
 This work is supported by the Hunan Provincial Natural Science Foundation of China (2020JJ4423), the Department of Education of Hunan Province (19A318) and the National Natural Science Foundation of China (11971164).

\textbf{Appendix A}
\begin{figure}[ht]
\begin{center}
  \includegraphics[width=10cm,height=6cm]{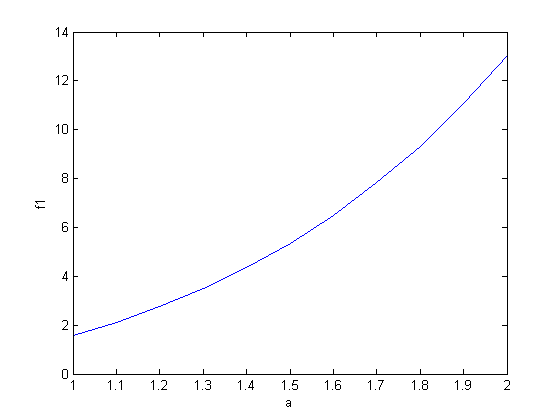}
 \end{center}
\vskip -0.5cm
\caption{$f1=2f'(a)=3^{a+1}\times ln3-3\times 2^{a+1}\times ln2>0$ $(1\leq a\leq 2)$ }\label{fig-12}
\end{figure}
\begin{figure}[ht]
\begin{center}
  \includegraphics[width=11cm,height=8cm]{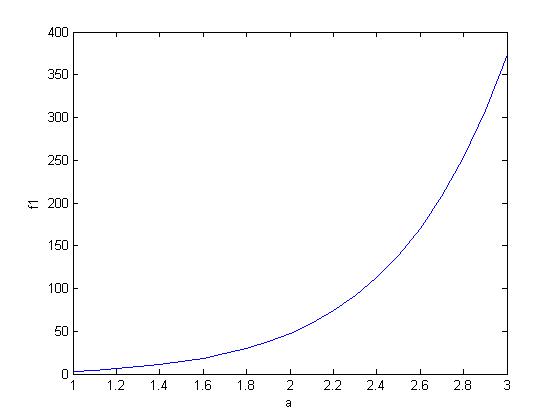}
 \end{center}
\vskip -0.5cm
\caption{$f1=2h'(a)=5^{a+1}\times ln5-2\times 4^{a+1}\times ln4+3^{a+1}\times ln3-2^{a+1}\times ln2>0$ $(1\leq a\leq 3)$}\label{fig-11}
\end{figure}

\end{document}